\documentclass[12pt]{amsart}
\usepackage{amsfonts}
\usepackage{amssymb}
\usepackage{amsmath}
\usepackage{graphicx}
\usepackage{axodraw}  
\usepackage{amsthm}
\usepackage{amscd}
\usepackage[all]{xy}               
\usepackage{epsfig,exscale}  
\usepackage{color}
\usepackage{txfonts} 
 \font \eightrm=cmr8
 
\newcommand{\nc}{\newcommand}
\newcommand{\butcher}{{\scriptstyle\circleright}}
\newcommand{\lbutcher}{{\raise 3.9pt\hbox{$\circ$}}\hskip -1.9pt{\scriptstyle \searrow}}

 \setlength{\textheight}{9in}
 \setlength{\topmargin}{-30pt}
 \setlength{\textwidth}{6.3in}
 \setlength{\oddsidemargin}{-7pt}
 \setlength{\evensidemargin}{-7pt}

\hfuzz5pt \vfuzz5pt

\newtheorem{thm}{Theorem}
\newtheorem{exam}{Example}

\newtheorem{cor}[thm]{Corollary}

\newtheorem{lem}[thm]{Lemma}

\newtheorem{defn}{Definition}


\nc{\ignore}[1]{{}}
\nc{\mrm}[1]{{\rm #1}}
\nc{\dirlim}{\displaystyle{\lim_{\longrightarrow}}\,}
\nc{\invlim}{\displaystyle{\lim_{\longleftarrow}}\,}
\nc{\vep}{\varepsilon} \nc{\ep}{\epsilon}
\nc{\sigmat}{\widetilde\sigma}
\nc{\ostar}{\overline{*}}

\nc{\mchar}{\mrm{Char}}
\nc{\Hom}{\mrm{Hom}}
\nc{\id}{\mrm{id}}

\nc{\remark}{\noindent{\bf{Remark:}}}
\nc{\remarks}{\noindent{\bf{Remarks:}}}

 \nc{\grad}[1]{^{({#1})}}
 \nc{\fil}[1]{_{#1}}

\nc{\BA}{{\Bbb A}} \nc{\CC}{{\Bbb C}} \nc{\DD}{{\Bbb D}}
\nc{\EE}{{\Bbb E}} \nc{\FF}{{\Bbb F}} \nc{\GG}{{\Bbb G}}
\nc{\HH}{{\Bbb H}} \nc{\LL}{{\Bbb L}} \nc{\NN}{{\Bbb N}}
\nc{\PP}{{\Bbb P}} \nc{\QQ}{{\Bbb Q}} \nc{\RR}{{\Bbb R}}
\nc{\TT}{{\Bbb T}} \nc{\VV}{{\Bbb V}} \nc{\ZZ}{{\Bbb Z}}
\nc{\Cal}[1]{{\mathcal {#1}}}
\nc{\mop}[1]{\mathop{\hbox {\rm #1} }}
\nc{\smop}[1]{\mathop{\hbox {\eightrm #1} }}
\nc{\mopl}[1]{\mathop{\hbox {\rm #1} }\limits}
\nc{\frakg}{{\frak g}}
\nc{\g}[1]{{\frak {#1}}}
\nc{\wt}{\widetilde}
\nc{\wh}{\widehat}
\nc{\un}{\hbox{\bf 1}}
\nc{\redtext}[1]{\textcolor{red}{#1}}
\nc{\bluetext}[1]{\textcolor{blue}{#1}}

\nc\fleche[1]{\mathop{\hbox to #1 mm{\rightarrowfill}}\limits}
\def\semi{\mathrel{\times}\kern -.85pt\joinrel\mathrel{\raise
    1.4pt\hbox{${\scriptscriptstyle |}$}}}

\def\fleche#1{\mathop{\hbox to #1 mm{\rightarrowfill}}\limits}
\def\gfleche#1{\mathop{\hbox to #1 mm{\leftarrowfill}}\limits}
\def\inj#1{\mathop{\hbox to #1 mm{$\lhook\joinrel$\rightarrowfill}}\limits}
\def\ginj#1{\mathop{\hbox to #1 mm{\leftarrowfill$\joinrel\rhook$}}\limits}
\def\surj#1{\mathop{\hbox to #1 mm{\rightarrowfill\hskip 2pt\llap{$\rightarrow$}}}\limits}
\def\gsurj#1{\mathop{\hbox to #1 mm{\rlap{$\leftarrow$}\hskip 2pt
      \leftarrowfill}}\limits}
\def \restr#1{\mathstrut_{\textstyle |}\raise-6pt\hbox{$\scriptstyle #1$}}
\def \srestr#1{\mathstrut_{\scriptstyle |}\hbox to
-1.5pt{}\raise-4pt\hbox{$\scriptscriptstyle #1$}}

\def\diagrama #1{\vskip 4mm \centerline {#1} \vskip 4mm}

\newcommand{\treel}{\hskip 0.5pc\scalebox{-0.3}{{\parbox{0.5pc}{
  \begin{picture}(60,45) (75,10)
    \SetWidth{1.5}
    \SetColor{Black}
    \Line(90,0)(90,45)
  \end{picture}}}}}

\newcommand{\treelsmall}{\hskip 0.5pc\scalebox{-0.3}{{\parbox{0.5pc}{
  \begin{picture}(60,45) (75,20)
    \SetWidth{1.5}
    \SetColor{Black}
    \Line(90,0)(90,45)
  \end{picture}}}}}

\newcommand{\treesmall}{\hskip 0.8pc\scalebox{-0.3}{{\parbox{0.5pc}{
  \begin{picture}(30,45) (75,-30)
    \SetWidth{1.5}
    \SetColor{Black}
   \Line(90,0)(70,-40)
    \Line(90,0)(110,-40)
    \Line(90,0)(90,15)
  \end{picture}}}}}

\newcommand{\treeA}{\hskip 1.5pc\scalebox{-0.3}{{\parbox{0.5pc}{
   \begin{picture}(60,75) (75,-30)
    \SetWidth{1.5}
    \SetColor{Black}
    \Line(90,0)(75,-30)
    \Line(90,0)(105,-30)
    \Line(105,30)(90,0)
    \Line(105,30)(135,-30)
    \Line(105,45)(105,30)
  \end{picture}}}}}

\newcommand{\treeB}{\hskip 1.5pc\scalebox{-0.3}{{\parbox{0.5pc}{
  \begin{picture}(60,75) (75,-30)
    \SetWidth{1.5}
    \SetColor{Black}
    \Line(90,0)(75,-30)
    \Line(105,30)(135,-30)
    \Line(105,45)(105,30)
    \Line(120,0)(105,-30)
    \Line(105,30)(90,0)
  \end{picture}}}}}

\newcommand{\treeC}{\hskip 1.5pc\scalebox{-0.2}{{\parbox{0.5pc}{
 \begin{picture}(90,105) (75,-30)
    \SetWidth{2.5}
    \SetColor{Black}
    \Line(90,0)(75,-30)
    \Line(120,60)(90,0)
    \Line(90,0)(105,-30)
    \Line(105,30)(135,-30)
    \Line(120,60)(165,-30)
    \Line(120,80)(120,60)
  \end{picture}
}}}}

\newcommand{\treeD}{\hskip 1.5pc\scalebox{-0.2}{{\parbox{0.5pc}{
 \begin{picture}(90,105) (75,-30)
    \SetWidth{2.5}
    \SetColor{Black}
    \Line(90,0)(75,-30)
    \Line(120,60)(90,0)
    \Line(90,0)(105,-30)
    \Line(120,60)(165,-30)
    \Line(120,80)(120,60)
    \Line(150,0)(135,-30)
  \end{picture}
}}}}

\newcommand{\treeE}{\hskip 1.5pc\scalebox{-0.2}{{\parbox{0.5pc}{
 \begin{picture}(90,105) (75,-30)
    \SetWidth{2.5}
    \SetColor{Black}
    \Line(90,0)(75,-30)
    \Line(120,60)(90,0)
    \Line(120,60)(165,-30)
    \Line(120,80)(120,60)
    \Line(150,0)(135,-30)
    \Line(135,30)(105,-30)
  \end{picture}
}}}}

\newcommand{\treeF}{\hskip 1.5pc\scalebox{-0.2}{{\parbox{0.5pc}{
 \begin{picture}(90,105) (75,-30)
    \SetWidth{2.5}
    \SetColor{Black}
    \Line(90,0)(75,-30)
    \Line(120,60)(90,0)
    \Line(120,60)(165,-30)
    \Line(120,80)(120,60)
    \Line(135,30)(105,-30)
    \Line(120,0)(135,-30)
  \end{picture}
}}}}

\newcommand{\treeG}{\hskip 1.5pc\scalebox{-0.2}{{\parbox{0.5pc}{
 \begin{picture}(90,105) (75,-30)
    \SetWidth{2.5}
    \SetColor{Black}
    \Line(90,0)(75,-30)
    \Line(120,60)(90,0)
    \Line(120,60)(165,-30)
    \Line(120,80)(120,60)
    \Line(105,30)(135,-30)
    \Line(120,0)(105,-30)
  \end{picture}
}}}}



\def\racine{{\scalebox{0.3}{
\begin{picture}(12,12)(38,-38)
\SetWidth{0.5} \SetColor{Black} \Vertex(45,-28){5.66}
\end{picture}}}}
  
 \def\arbrea{\,{\scalebox{0.15}{ 
  \begin{picture}(8,55) (370,-248)
    \SetWidth{2}
    \SetColor{Black}
    \Line(374,-244)(374,-200)
    \Vertex(374,-197){9}
    \Vertex(375,-245){12}
  \end{picture}
}}\,}

 \def\arbreba{\,{\scalebox{0.15}{ 
\begin{picture}(8,106) (370,-197)
    \SetWidth{2}
    \SetColor{Black}
    \Line(374,-193)(374,-149)
    \Vertex(374,-146){9}
    \Vertex(375,-194){12}
    \Line(374,-142)(374,-98)
    \Vertex(374,-95){9}
  \end{picture}
}}\,}

 \def\arbrebb{\,{\scalebox{0.15}{ 
  \begin{picture}(48,48) (349,-255)
    \SetWidth{2}
    \SetColor{Black}
    \Vertex(375,-252){12}
    \Line(376,-250)(395,-215)
    \Line(373,-251)(354,-214)
    \Vertex(353,-211){9}
    \Vertex(395,-213){9}
  \end{picture}
}}}

\def\arbreca{\,{\scalebox{0.15}{
\begin{picture}(8,156) (370,-147)
    \SetWidth{2}
    \SetColor{Black}
    \Line(374,-143)(374,-99)
    \Vertex(374,-96){9}
    \Vertex(375,-144){12}
    \Line(374,-92)(374,-48)
    \Vertex(374,-45){9}
    \Line(374,-42)(374,2)
    \Vertex(374,5){9}
  \end{picture}
}}\,}

\def\arbrecb{\,{\scalebox{0.15}{
\begin{picture}(48,94) (349,-255)
\SetWidth{2}
\SetColor{Black}
\Line(376,-204)(395,-169)
\Line(373,-205)(354,-168)
\Vertex(353,-165){9}
\Vertex(395,-167){9}
\Vertex(374,-205){9}
\Line(374,-246)(374,-209)
\Vertex(374,-252){12}
\end{picture}}}\,}

\def\arbrecc{\,{\scalebox{0.15}{
 \begin{picture}(48,98) (349,-205)
    \SetWidth{2}
    \SetColor{Black}
    \Vertex(375,-202){12}
    \Line(376,-200)(395,-165)
    \Line(373,-201)(354,-164)
    \Vertex(353,-161){9}
    \Vertex(395,-163){9}
    \Line(353,-160)(353,-113)
    \Vertex(353,-111){9}
  \end{picture}
}}\,}

\def\arbreccc{\,{\scalebox{0.15}{
 \begin{picture}(48,98) (349,-205)
    \SetWidth{2}
    \SetColor{Black}
    \Vertex(375,-202){12}
    \Line(376,-200)(395,-165)
    \Line(373,-201)(354,-164)
    \Vertex(353,-161){9}
    \Vertex(395,-163){9}
    \Line(395,-160)(395,-113)
    \Vertex(395,-111){9}
  \end{picture}
}}\,}

\def\arbrecd{\,{\scalebox{0.15}{
\begin{picture}(48,52) (349,-251)
    \SetWidth{2}
    \SetColor{Black}
    \Vertex(375,-248){12}
    \Line(376,-246)(395,-211)
    \Line(373,-247)(354,-210)
    \Vertex(353,-207){9}
    \Vertex(395,-209){9}
    \Line(375,-247)(375,-206)
    \Vertex(376,-203){9}
  \end{picture}
 }}\,}

\def\arbreda{\,{\scalebox{0.15}{
\begin{picture}(8,204) (370,-99)
    \SetWidth{2}
    \SetColor{Black}
    \Line(374,-95)(374,-51)
    \Vertex(374,-48){9}
    \Vertex(375,-96){12}
    \Line(374,-44)(374,0)
    \Vertex(374,3){9}
    \Line(374,6)(374,50)
    \Vertex(374,53){9}
    \Line(374,53)(374,98)
    \Vertex(374,101){9}
  \end{picture}
}}\,}

\def\arbredb{\,{\scalebox{0.15}{
\begin{picture}(48,135) (349,-255)
    \SetWidth{2}
    \SetColor{Black}
    \Line(376,-163)(395,-128)
    \Line(373,-164)(354,-127)
    \Vertex(353,-124){9}
    \Vertex(395,-126){9}
    \Vertex(374,-164){9}
    \Line(374,-205)(374,-168)
    \Vertex(374,-207){9}
    \Line(374,-248)(374,-211)
    \Vertex(374,-252){12}
  \end{picture}
}}\,}

\def\arbredc{\,{\scalebox{0.15}{
 \begin{picture}(48,150) (349,-205)
    \SetWidth{2}
    \SetColor{Black}
    \Line(376,-148)(395,-113)
    \Line(373,-149)(354,-112)
    \Vertex(353,-109){9}
    \Vertex(395,-111){9}
    \Line(353,-108)(353,-61)
    \Vertex(353,-59){9}
    \Line(374,-200)(374,-153)
    \Vertex(374,-149){9}
    \Vertex(374,-202){12}
  \end{picture}
}}\,}

\def\arbredd{\,{\scalebox{0.15}{
 \begin{picture}(48,99) (349,-251)
    \SetWidth{2}
    \SetColor{Black}
    \Line(376,-199)(395,-164)
    \Line(373,-200)(354,-163)
    \Vertex(353,-160){9}
    \Vertex(395,-162){9}
    \Vertex(376,-156){9}
    \Vertex(376,-248){12}
    \Line(375,-245)(375,-204)
    \Line(375,-200)(375,-159)
    \Vertex(375,-201){9}
  \end{picture}
}}\,}

\def\arbrede{\,{\scalebox{0.15}{
 \begin{picture}(48,153) (349,-150)
    \SetWidth{2}
    \SetColor{Black}
    \Vertex(375,-147){12}
    \Line(376,-145)(395,-110)
    \Line(373,-146)(354,-109)
    \Vertex(353,-106){9}
    \Vertex(395,-108){9}
    \Line(353,-105)(353,-58)
    \Vertex(353,-56){9}
    \Line(353,-52)(353,-5)
    \Vertex(353,-1){9}
  \end{picture}
}}\,}

\def\arbredf{\,{\scalebox{0.15}{
\begin{picture}(48,98) (349,-205)
    \SetWidth{2}
    \SetColor{Black}
    \Vertex(375,-202){12}
    \Line(376,-200)(395,-165)
    \Line(373,-201)(354,-164)
    \Vertex(353,-161){9}
    \Vertex(395,-163){9}
    \Line(353,-160)(353,-113)
    \Vertex(353,-111){9}
    \Line(395,-159)(395,-112)
    \Vertex(395,-111){9}
  \end{picture}
}}\,}

\def\arbredz{\,{\scalebox{0.15}{
  \begin{picture}(68,88) (329,-215)
    \SetWidth{2}
    \SetColor{Black}
    \Vertex(375,-212){12}
    \Line(376,-210)(395,-175)
    \Line(373,-211)(354,-174)
    \Vertex(353,-171){9}
    \Vertex(395,-173){9}
    \Line(351,-168)(332,-131)
    \Line(355,-168)(374,-133)
    \Vertex(333,-131){9}
    \Vertex(374,-131){9}
  \end{picture}
}}\,}

\def\arbredg{\,{\scalebox{0.15}{
\begin{picture}(48,98) (349,-205)
    \SetWidth{2}
    \SetColor{Black}
    \Vertex(375,-202){12}
    \Line(376,-200)(395,-165)
    \Line(373,-201)(354,-164)
    \Vertex(353,-161){9}
    \Vertex(395,-163){9}
    \Line(375,-201)(375,-160)
    \Vertex(376,-157){9}
    \Vertex(376,-111){9}
    \Line(375,-155)(375,-114)
  \end{picture}
}}\,}

\def\arbredh{\,{\scalebox{0.15}{
 \begin{picture}(90,46) (330,-257)
    \SetWidth{2}
    \SetColor{Black}
    \Vertex(375,-254){12}
    \Line(376,-252)(395,-217)
    \Vertex(395,-215){9}
    \Line(374,-254)(335,-226)
    \Vertex(334,-224){9}
    \Line(375,-252)(356,-215)
    \Vertex(355,-215){9}
    \Line(374,-255)(417,-227)
    \Vertex(418,-225){9}
  \end{picture}
}}\,}

\def\arbrebbLab{\,{\scalebox{0.22}{
  \begin{picture}(48,48) (349,-255)
    \SetWidth{2}
    \SetColor{Black}
    \Vertex(375,-252){12}
    \Line(376,-250)(395,-215)
    \Line(373,-251)(354,-214)
    \Vertex(353,-211){9}
    \Vertex(395,-211){9}
    \Text(370,-285)[lb]{\Huge{\Black{$u$}}}
    \Text(320,-215)[lb]{\Huge{\Black{$w$}}}
    \Text(415,-215)[lb]{\Huge{\Black{$v$}}}
  \end{picture}
}}}

\def\arbrebbLabel{\,{\scalebox{0.25}{
  \begin{picture}(48,48) (349,-255)
    \SetWidth{2}
    \SetColor{Black}
    \Vertex(375,-252){12}
    \Line(376,-250)(395,-215)
    \Line(373,-251)(354,-214)
    \Vertex(353,-211){9}
    \Vertex(395,-211){9}
    \Text(370,-290)[lb]{\Huge{\Black{$v_1$}}}
    \Text(320,-215)[lb]{\Huge{\Black{$v_2$}}}
    \Text(408,-215)[lb]{\Huge{\Black{$v_3$}}}
  \end{picture}
}}}

\def\arbrebbLabe{\,{\scalebox{0.25}{
  \begin{picture}(48,48) (349,-255)
    \SetWidth{2}
    \SetColor{Black}
    \Vertex(375,-252){12}
    \Line(376,-250)(395,-215)
    \Line(373,-251)(354,-214)
    \Vertex(353,-211){9}
    \Vertex(395,-211){9}
    \Text(370,-290)[lb]{\Huge{\Black{$w_1$}}}
    \Text(315,-215)[lb]{\Huge{\Black{$w_3$}}}
    \Text(408,-215)[lb]{\Huge{\Black{$w_2$}}}
  \end{picture}
}}}

\def\arbrecccLab{\,{\scalebox{0.25}{
 \begin{picture}(48,98) (349,-205)
    \SetWidth{2}
    \SetColor{Black}
    \Vertex(375,-202){12}
    \Line(376,-200)(395,-165)
    \Line(373,-201)(354,-164)
    \Vertex(353,-161){9}
    \Vertex(395,-163){9}
    \Line(395,-160)(395,-113)
    \Vertex(395,-111){9}
    \Text(370,-235)[lb]{\Huge{\Black{$v_1$}}}
    \Text(320,-170)[lb]{\Huge{\Black{$v_3$}}}
    \Text(410,-170)[lb]{\Huge{\Black{$v_2$}}}
    \Text(410,-120)[lb]{\Huge{\Black{$v_4$}}}
  \end{picture}
}}\,}

\def\arbrecdLab{\,{\scalebox{0.25}{
\begin{picture}(48,52) (349,-251)
    \SetWidth{2}
    \SetColor{Black}
    \Vertex(375,-248){12}
    \Line(376,-246)(395,-211)
    \Line(373,-247)(354,-210)
    \Vertex(353,-207){9}
    \Vertex(395,-209){9}
    \Line(375,-247)(375,-206)
    \Vertex(376,-203){9}
    \Text(370,-285)[lb]{\Huge{\Black{$w_1$}}}
    \Text(315,-210)[lb]{\Huge{\Black{$w_4$}}}
    \Text(365,-190)[lb]{\Huge{\Black{$w_3$}}}
    \Text(410,-210)[lb]{\Huge{\Black{$w_2$}}}
  \end{picture}
 }}\,}

\def\arbreebz{\,{\scalebox{0.25}{
\begin{picture}(90,120) (350,-275)
    \SetWidth{2}
    \SetColor{Black}
    \Vertex(400,-290){12}
    \Line(470,-130)(470,-210)
    \Vertex(470,-130){9}
    \Line(398,-299)(330,-210)
    \Vertex(330,-210){9}
    \Line(320,-205)(270,-130)
    \Vertex(270,-130){9}
    \Line(400,-299)(470,-210)
    \Vertex(470,-210){9}
    \Line(335,-210)(370,-130)
    \Vertex(370,-130){9}
    \Line(475,-210)(520,-130)
    \Vertex(520,-130){9}
    \Line(465,-210)(420,-130)
    \Vertex(420,-130){9}
    \Line(520,-130)(520,-40)
    \Vertex(520,-40){9}
    \Line(270,-130)(270,-40)
    \Vertex(270,-40){9}
    \Text(397,-325)[lb]{\Huge{\Black{$1$}}}
    \Text(465,-245)[lb]{\Huge{\Black{$2$}}}
    \Text(325,-245)[lb]{\Huge{\Black{$7$}}}
    \Text(250,-120)[lb]{\Huge{\Black{$9$}}}
    \Text(465,-115)[lb]{\Huge{\Black{$5$}}}
    \Text(365,-115)[lb]{\Huge{\Black{$8$}}}
    \Text(530,-120)[lb]{\Huge{\Black{$3$}}}
    \Text(415,-115)[lb]{\Huge{\Black{$6$}}}
    \Text(515,-25)[lb]{\Huge{\Black{$4$}}}
    \Text(260,-25)[lb]{\Huge{\Black{$10$}}}
  \end{picture}}}\,}
\def\table{{\scalebox{1.00}{
\begin{picture}(275,275) (290,170)
\SetWidth{0.8}
\SetColor{Black}
\Line(260,422)(610,422)
\Line(290,440)(290,170)
\Line(340,440)(340,170)
\Line(485,440)(485,170)
\Text(360,429)[lb]{\normalsize{\Black{$\Psi(\sigma)$}}}
\Text(500,429)[lb]{\normalsize{\Black{$\Psi^{-1}(\sigma)$}}}
\Text(310,429)[lb]{\normalsize{\Black{$d(\sigma)$}}}
\Text(270,432)[lb]{\normalsize{\Black{$\sigma$}}}
\Text(270,400)[lb]{\normalsize{\Black{$\racine$}}}
\Text(315,400)[lb]{\normalsize{\Black{$0$}}}
\Text(370,400)[lb]{\normalsize{\Black{$\racine$}}}
\Text(515,400)[lb]{\normalsize{\Black{$\racine$}}}
\Text(270,375)[lb]{\normalsize{\Black{$\arbrea$}}}
\Text(315,375)[lb]{\normalsize{\Black{$1$}}}
\Text(370,375)[lb]{\normalsize{\Black{$\arbrea$}}}
\Text(515,375)[lb]{\normalsize{\Black{$\arbrea$}}}
\Text(270,345)[lb]{\normalsize{\Black{$\arbreba$}}}
\Text(315,345)[lb]{\normalsize{\Black{$3$}}}
\Text(370,345)[lb]{\normalsize{\Black{$\arbreba$}}}
\Text(515,345)[lb]{\normalsize{\Black{$\arbreba$}}}
\Text(267,325)[lb]{\normalsize{\Black{$\arbrebb$}}}
\Text(315,325)[lb]{\normalsize{\Black{$2$}}}
\Text(360,325)[lb]{\normalsize{\Black{$\arbrebb+\arbreba$}}}
\Text(500,325)[lb]{\normalsize{\Black{$\arbrebb-\arbreba$}}}
\Text(270,290)[lb]{\normalsize{\Black{$\arbreca$}}}
\Text(315,290)[lb]{\normalsize{\Black{$6$}}}
\Text(370,290)[lb]{\normalsize{\Black{$\arbreca$}}}
\Text(515,290)[lb]{\normalsize{\Black{$\arbreca$}}}
\Text(267,265)[lb]{\normalsize{\Black{$\arbrecb$}}}
\Text(315,265)[lb]{\normalsize{\Black{$5$}}}
\Text(360,265)[lb]{\normalsize{\Black{$\arbrecb+\arbreca$}}}
\Text(500,265)[lb]{\normalsize{\Black{$\arbrecb-\arbreca$}}}
\Text(267,235)[lb]{\normalsize{\Black{$\arbrecc$}}}
\Text(315,235)[lb]{\normalsize{\Black{$4$}}}
\Text(360,235)[lb]{\normalsize{\Black{$\arbrecc+\arbreca$}}}
\Text(500,235)[lb]{\normalsize{\Black{$\arbrecc-\arbreca$}}}
\Text(267,205)[lb]{\normalsize{\Black{$\arbreccc$}}}
\Text(315,205)[lb]{\normalsize{\Black{$4$}}}
\Text(350,205)[lb]{\normalsize{\Black{$\arbreccc+\arbrecb+\arbreca$}}}
\Text(500,205)[lb]{\normalsize{\Black{$\arbreccc-\arbrecb$}}}
\Text(267,175)[lb]{\normalsize{\Black{$\arbrecd$}}}
\Text(315,175)[lb]{\normalsize{\Black{$3$}}}
\Text(350,175)[lb]{\normalsize{\Black{$\arbrecd+\arbrecc+2\arbreccc+\arbrecb+\arbreca$}}}
\Text(500,175)[lb]{\normalsize{\Black{$\arbrecd-\arbrecc-2\arbreccc+\arbrecb+\arbreca$}}}
\end{picture}}}}
\begin{document}
\title[Monomial bases]
      {Monomial bases for free pre-Lie algebras}

\author{Mahdi J. Hasan Al-Kaabi}

\address {Mathematics Department, College of Science, Al-Mustansiriya University, Palestine Street, P.O.Box 14022, Baghdad, IRAQ.}  
\address{Laboratoire de Math\'ematiques, CNRS-UMR6620, Universit\'e Blaise Pascal, 24 Avenue des Landais, Les C\'ezeaux, BP 80026, F63171 Aubi\`ere, CEDEX, France.}
\address{E-mail address:Mahdi.Alkaabi@math.univ-bpclermont.fr}

\date{}
\begin{abstract}
In this paper, we study the concept of free pre-Lie algebra generated by a (non-empty) set. We review the construction by A. Agrachev and R. Gamkrelidze \cite{AR81} of monomial bases in free pre-Lie algebras. We describe the matrix of the monomial basis vectors in terms of the rooted trees basis exhibited by F. Chapoton and M. Livernet in \cite{CL01}. Also, we show that this matrix is unipotent and we find an explicit expression for its coefficients, which uses a similar procedure for the free magmatic algebra at the level of planar rooted trees which has been suggested by K. Ebrahimi-Fard and D. Manchon.       
\end{abstract}

\maketitle
\tableofcontents

\section{Introduction}

Pre-Lie algebra structures appear in various domains of mathematics: differential geometry, quantum field theory, differential equations. They have been studied intensively recently; we refer e.g. to the survey papers: \cite{D.B06, P.C11, D.M}. Free pre-Lie algebras had already been studied as early as 1981 by A. Agrachev and R. V. Gamkrelidze in their joint work "\textsl{Chronological algebras and nonstationary vector fields}" \cite{AR81}, and also by D. Segal in \cite{DS94}. In particular, both papers give a construction of monomial bases, with different approaches. Besides, rooted trees are a classical topic, closely connected to pre-Lie algebras.  They appeared for example in the study of \textsl{vector fields} \cite{A.C57}, \textsl{numerical analysis} \cite{CB00}, and more recently in \textsl{quantum field theory} \cite{CK98}. Bases for free pre-Lie algebras in terms of rooted trees were introduced by F. Chapoton and M. Livernet in \cite{CL01}, using the point of view of operads. A. Dzhumadil'Daev and C. L\"ofwall described independently two bases for free pre-Lie algebras, one using the concept of rooted trees, and the other obtained by considering a basis for the free (non-associative) algebra modulo the pre-Lie relation \cite{AC02}. \\
 
In our paper, we study free pre-Lie algebras. We describe an explicit method for finding suitable monomial bases for them: recall that the space $\Cal{T}\!$ spanned by (non-planar) rooted trees forms with the grafting operation $"\to"$ the free pre-Lie algebra with one generator \cite{CL01, AC02}. A monomial in the free pre-Lie algebra with one generator is a parenthesized word built up from the generator $"\racine\,"$ and the pre-Lie grafting operation $"\to"$, for example:
$$(\racine\to\racine)\to\big(\racine\to(\racine\to\racine)\big).$$

We are interested in particular monomial bases which will be called "tree-grounded". To each monomial we can associate a "lower-energy term" by replacing the grafting operation "$\to$" by the Butcher product "$\butcher$". A monomial basis of $\Cal T$ will be called "tree-grounded" if the lower-energy terms of each monomial give back the Chapoton-Livernet tree basis of $\Cal T$. We show that tree-grounded monomial bases are in one-to-one correspondence with choices $t\mapsto S(t)$ of a planar representative for each tree $t$. We give an explicit expression for the coefficients of these monomials in the basis of rooted trees, thus exhibiting a square matrix $[\![\beta_S(s,t)]\!]_{s,\,t\in\Cal{T}_{\!\!n}}^{}$ for each degree $n>0$.\\
 
This paper consists in two main sections: Section \ref{section 1} contains some preliminaries on planar and non-planar rooted trees, Butcher products and grafting products. In this section, we also review the joint work of K. Ebrahimi-Fard and D. Manchon (unpublished) who described an explicit algebra isomorphism $\Psi$ between two structures of free magmatic algebras defined on the space $\Cal{T}^{pl}$ of all planar rooted trees, by the left Butcher product $"\,\lbutcher"$ and the left grafting product $"\searrow"$ respectively. We give the explicit expression of the coefficients $c(\sigma, \tau)$ of this isomorphism in the planar rooted tree basis. Using their work, and by defining a bijective linear map $\widetilde{\Psi}_S$ which depends on the choice of a section $S$ of the "forget planarity" projection $\pi$ alluded to above, we find a formula for the coefficients $\beta_S(s, t)$ of $\widetilde{\Psi}_S$ in the (non-planar) rooted tree basis. This can be visualized on the following diagram:
\diagrama{
\xymatrix{
\tau=m(\racine,\lbutcher) \in \Cal {T}_{\!\!n}^{pl} \ar[r]^{\Psi}\ar@<12mm>@{->>}[d]^{\pi}
&\Cal {T}_{\!\!n}^{pl} \ni m(\racine, \searrow)\ar@<-9mm>@{->>}[d]^{\pi}\\
t=m(\racine, \butcher) \in \Cal{T}_{\!\!n}\ar@{.>}[r]_{{\widetilde\Psi}_S}\ar@<-10mm>@{.>}[u]^S & \Cal{T}_{\!\!n}\ni m(\racine, \to)}}
for any homogeneous components $\Cal{T}^{pl}_{\!\!n}$ and $\Cal{T}_{\!\!n}$.\\

In Section \ref{free pre-lie algebra}, we recall some basic topics on free pre-Lie algebras. We describe the construction of a monomial basis for each homogeneous subspace $\Cal{A}_n$ in free pre-Lie algebras $\Cal{A}_E$ generated by a (non-empty) set $E$, using a type of algebra isomorphism obtained by A. Agrachev and R. V. Gamkrelidze \cite{AR81}. Finally, the constructions in Sections \ref{section 1} and \ref{free pre-lie algebra} can be related as follows: we show that a tree-grounded monomial basis in a free pre-Lie algebra defines a section $S$ of the projection $\pi:\Cal{T}^{pl} \longrightarrow\hskip -5mm\longrightarrow \Cal{T}$ and, conversely, that any section of $\pi$ defines a tree-grounded monomial basis.
     
\section{Planar and non-planar rooted trees}\label{section 1}

In graph theory, a tree is a undirected connected graph together with vertices connected with each other, without cycles, by simple paths called edges \cite{EM}. A rooted tree is defined as a tree with one designated vertex called the root. The other remaining vertices are partitioned into $k\geq 0$ disjoint subsets such that each of them in turn represents a rooted tree, and a subtree of the whole tree. This can be taken as a recursive definition for rooted trees, widely used in computer algorithms \cite{D.K68}. Rooted trees stand among the most important structures appearing in many branches of pure and applied mathematics. \\

In general, a tree structure can be described as a "branching" relationship between vertices, much like that found in the trees of nature. Many types of trees defined by all sorts of constraints on properties of vertices appear to be of interest in combinatorics and in related areas such as formal logic and computer science.

\begin{defn}
A planar binary tree is a finite oriented tree embedded in the plane, such that each internal vertex has exactly two incoming edges and one outgoing edge. One of the internal vertices, called the root, is a distinguished vertex with two incoming edges and one edge, like a tail at the bottom, not ending at a vertex.
\end{defn} 

The incoming edges in this type of trees are internal (connecting two internal vertices), or external (with one free end). The external incoming edges are called the leaves. We give here some examples of planar binary trees:
$$\treel\,\,\,\,\treesmall\,\,\,\,\,\,\treeA\,\,\,\,\treeB\,\,\,\,\treeC\,\,\,\,\treeE\,\,\,\,\treeD\,\,\,\,\treeF\,\,\,\,\treeG\,\,\,\,\ldots,$$
where the single edge "$\treelsmall$" is the unique planar binary tree without internal vertices. The degree of any planar binary tree is the number of its leaves. Denote by $T^{bin}_{pl}$ (respectively $\Cal{T}^{bin}_{\!pl}$) the set (respectively the linear span) of planar binary trees.\\

Define the grafting operation "$\vee$" on the space $\Cal{T}^{bin}_{\!pl}$ to be the operation that maps any planar binary trees $t_1, t_2$ into a new planar binary tree "$t_1 \vee t_2$", which takes the $Y$-shaped tree $\treesmall$ replacing the left (respectively the right) branch by $t_1$ (respectively $t_2$), see the following examples:
$$ \treel \vee \treel = \treesmall\,,\,\,\,\treel \vee \treesmall = \treeA\,,\,\,\,\treesmall \vee \treel = \treeB\,,\,\,\,\treesmall \vee \treesmall = \treeD\,,\,\,\,\treel \vee \treeA = \treeC.$$

Let $D$ be any (non-empty) set, the free magma $M_{\!D}^{}$ generated by $D$ can be described to be the set of planar binary trees with leaves decorated by the elements of $D$, together with the "$\vee$" product described above \cite{D.K68, PT09}. Moreover, the linear span $\Cal{T}^{bin}_{\!pl}$, generated by the trees of the magma $M_{\!D}^{}$ defined above, equipped with the grafting "$\vee$" is a description of the free magmatic algebra.   
\begin{defn}
For any positive integer $n$, a rooted tree of degree $n$, or simply $n$-rooted tree, is a finite oriented tree together with $n$ vertices. One of them, called the root, is a distinguished vertex without any outgoing edge. Any vertex can have arbitrarily many incoming edges, and any vertex distinct from the root has exactly one outgoing edge. Vertices with no incoming edges are called leaves.
\end{defn}

A rooted tree is said to be planar, if it is endowed with an embedding in the plane. Otherwise, its called a (non-planar) rooted tree. Here are the planar rooted trees up to four vertices:
$$\racine\,\,\,\,\arbrea\,\,\,\,\arbreba\,\,\,\,\arbrebb\,\,\,\,\arbreca\,\,\,\,\arbrecb\,\,\,\,\arbrecc\,\,\,\,\arbreccc\,\,\,\,\arbrecd\,\,\,\,\cdots $$

Denote by $T^{pl}$ (respectively $T$) the set of all planar (respectively non-planar) rooted trees, and $\Cal{T}^{pl}$ (respectively $\Cal{T}$) the linear space spanned by the elements of $T^{pl}$ (respectively $T$). Every rooted tree $\sigma$ can be written as:
\begin{equation}\label{B+}
\sigma = B_+(\sigma_1\,\cdots\,\sigma_k),
\end{equation}
where $B_+$ is the operation which grafts a monomial $\sigma_1\, \cdots\,\sigma_k$ of rooted trees on a common root, which gives a new rooted tree by connecting the root of each $\sigma_{i}$, by an edge, to the new root. The planar rooted tree $\sigma$ in formula \eqref{B+} depends on the order of the branch planar trees $\sigma_j$, whereas this order is not important for the corresponding (non-planar) tree.

\subsection{(Left) Butcher product and left grafting}
 The (left) Butcher product $"\,\lbutcher"$ of any planar rooted trees $\sigma$ and $\tau$ is:
\begin{equation}\label{butcher product}
 \sigma \lbutcher \tau := B_{+}(\sigma \tau_1 \cdots \tau_{k}),
\end{equation}
where $\tau_1, \ldots, \tau_{k}\!\in\!T^{pl}\!,$ such that $\tau = B_{+}(\tau_1 \cdots \tau_{k})$. It maps the pair of trees $(\sigma, \tau)$ into a new planar rooted tree induced by grafting the root of $\sigma$, on the left via a new edge, on the root of $\tau$.\\

 The usual product $"\butcher"$ in the non-planar case, given by the same formula \eqref{butcher product}, is known as the Butcher product. It is non-associative permutative (NAP), i.e. it satisfies the following identity:
$$ s \butcher (s' \butcher t) = s' \butcher ( s \butcher t),$$ 
for any (non-planar) trees $s,s',t$. Indeed, for $t = B_{+}(t_1 \cdots t_k)$, where
$t_1,..., t_k$ in $T$, we have:
\begin{align*}
s \butcher (s' \butcher t) &= s \butcher (B_{+}(s' t_1 \cdots t_k))\\
&= B_{+}(s s't_1 \cdots t_k)\\
&= B_{+}(s'\!s\,t_1 \cdots t_k)\\
&= s' \butcher (B_{+}(s\,t_1 \cdots t_k))\\ 
&= s' \butcher (s \butcher t).
\end{align*}

D. E. Knuth in his work \cite{D.K68} described a relation between the planar binary trees and the planar rooted trees. He introduced a bijection $\Phi: T^{bin}_{pl} \longrightarrow T^{pl}$ called the rotation correspondence\footnote{For more details about the rotation correspondence see \cite[Paragraph 2.3.2]{D.K68}, \cite{JM04} and \cite[Paragraph 1.5.3]{PT09}.}, recursively defined by:
\begin{equation}\label{Phi}
\Phi(\treel\!)= \racine\,,\,\,and\,\,\Phi(t_1 \vee t_2) = \Phi(t_1)\,\lbutcher\,\Phi(t_2), \forall t_1, t_2 \in T^{bin}_{pl}.
\end{equation}
Let us compute a few terms:
$$\Phi(\treesmall\,) = \Phi(\treel)\,\lbutcher\,\Phi(\treel) = \arbrea\,,\,\,\,\Phi(\treeB) = \Phi(\treesmall\,)\,\lbutcher\,\Phi(\treel) = \arbreba\,,\,\,\,\Phi(\treeA) = \arbrebb, $$
$$\Phi(\treeE) = \arbreca\,,\,\,\,\Phi(\treeF) = \arbrecb\,,\,\,\,\Phi(\treeD) = \arbrecc\,,\,\,\,\Phi(\treeG) = \arbreccc\,,\,\,\,\Phi(\treeC) = \arbrecd.$$

The bijection given in \eqref{Phi} realizes the free magma $M_{\!D}^{}$ as the set of planar rooted trees with $D$-decorated vertices, endowed with the left Butcher product. Also, the linear span $\Cal{T}^{pl},$ generated by the planar trees of the magma $M_{\!D}^{}$, forms with the product $"\,\lbutcher"$ another description of the free magmatic algebra.
 
\begin{defn}\label{left grafting}
The left grafting $"\searrow"$ is a bilinear operation defined on the vector space $\Cal{T}^{pl}$, such that for any planar rooted trees $\sigma$ and $\tau$:
\begin{equation}\label{searrow}
\sigma \searrow \tau = \sum_{v\,vertex\,of\,\tau}{\sigma \searrow_{v}\tau},
\end{equation}
where "$\sigma \searrow_{v} \tau$" is the tree obtained by grafting the tree $\sigma$, on the left, on the vertex $v$ of the tree $\tau$, such that $\sigma$ becomes the leftmost branch, starting from $v$, of this new tree.
\end{defn}

\begin{exam}
$$\arbrea \searrow \arbreba = \arbreda+\arbredc+\arbredf.$$
\end{exam}
This type of grafting again provides the space $\Cal{T}^{pl}$ with a structure of free magmatic algebra: K. Ebrahimi-Fard and D. Manchon showed that the two structures defined on $\Cal{T}^{pl}$, one by the product $"\,\lbutcher"$ and the other by $"\searrow"$, are linearly isomorphic, as follows: define the potential energy $d(\sigma)$ of a planar rooted tree $\sigma$ to be the sum of the heights of its vertices. Introduce the decreasing filtration $\Cal{T}^{pl} = \Cal{T}^{(0)}_{pl} \supset \Cal{T}^{(1)}_{pl} \supset \Cal{T}^{(2)}_{pl} \supset ...$, where $\Cal{T}^{(k)}_{pl}$ is the vector space spanned by planar rooted trees $\sigma$ with $d(\sigma) \geq k$.  
\begin{thm}\label{D.K}
There is a unique linear isomorphism $\Psi$ from $\Cal{T}^{pl}$ onto $\Cal{T}^{pl}$, defined as:
\begin{equation}
\Psi(\racine\,) = \racine\,,\,and\,\,\Psi(\sigma_1 \lbutcher\,\sigma_2) = \Psi(\sigma_1)\searrow\Psi(\sigma_2),\, \forall \sigma_1, \sigma_2 \in T^{pl}.
\end{equation} 
It respects the graduation (given by the number of vertices), and the associated graded map $Gr\,\Psi$ (with respect to the potential energy filtration above) reduces to the identity.
\end{thm}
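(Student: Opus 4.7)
The plan is to combine the freeness of $(\Cal{T}^{pl},\lbutcher)$ as a magmatic algebra on one generator---established in the excerpt via Knuth's rotation correspondence $\Phi$---with a triangularity argument for the potential-energy filtration.

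First I would handle existence and uniqueness simultaneously. Every planar rooted tree $\sigma\neq\racine$ admits a \emph{unique} decomposition $\sigma=\sigma_1\lbutcher\sigma_2$ with $\sigma_1,\sigma_2\in T^{pl}$ of strictly smaller degree, so prescribing $\Psi(\racine)=\racine$ together with the recursion $\Psi(\sigma_1\lbutcher\sigma_2)=\Psi(\sigma_1)\searrow\Psi(\sigma_2)$ determines a well-defined linear endomorphism of $\Cal{T}^{pl}$, and any map obeying the stated compatibility must coincide with it. Preservation of the grading by vertex count is then a one-line induction on $|\sigma|$, since $\lbutcher$ and every summand $\searrow_{v}$ are additive on vertex counts.

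The heart of the argument is the identity
$$\sigma\searrow\tau \;=\; \sigma\lbutcher\tau \;+\; R(\sigma,\tau),\qquad R(\sigma,\tau)\in\Cal{T}^{(d(\sigma\lbutcher\tau)+1)}_{pl},$$
to be read off from \eqref{searrow}. Indeed, among the summands $\sigma\searrow_{v}\tau$ of $\sigma\searrow\tau$, the one indexed by $v$ the root of $\tau$ equals $\sigma\lbutcher\tau$, while grafting at any other vertex $v$ of height $h(v)\geq 1$ shifts every vertex of $\sigma$ up by $h(v)+1\geq 2$ rather than $1$. An explicit count gives $d(\sigma\searrow_{v}\tau)=d(\tau)+d(\sigma)+|\sigma|(h(v)+1)$, which strictly exceeds $d(\sigma\lbutcher\tau)=d(\tau)+d(\sigma)+|\sigma|$ whenever $v$ is not the root. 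Combining this identity with the inductive hypothesis $\Psi(\sigma_i)\equiv\sigma_i$ modulo $\Cal{T}^{(d(\sigma_i)+1)}_{pl}$ and the bilinearity of $\searrow$ yields $\Psi(\sigma_1\lbutcher\sigma_2)\equiv\sigma_1\lbutcher\sigma_2$ modulo $\Cal{T}^{(d(\sigma_1\lbutcher\sigma_2)+1)}_{pl}$, which is exactly $Gr\,\Psi=\id$. Bijectivity then follows because the matrix of $\Psi$ on each finite-dimensional vertex-count component, written in the planar-tree basis ordered by increasing potential energy, is unipotent.

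The main obstacle I anticipate is the bookkeeping needed to propagate the filtration through $\searrow$: when invoking the inductive hypothesis I must check that $\sigma\searrow\tau'\in\Cal{T}^{(d(\sigma\lbutcher\tau)+1)}_{pl}$ whenever $\tau'\in\Cal{T}^{(d(\tau)+1)}_{pl}$, and symmetrically in the first slot. Both follow from the explicit formula $d(\sigma\searrow_{v}\tau)=d(\tau)+d(\sigma)+|\sigma|(h(v)+1)$ recorded above, but packaging this cleanly so that the induction passes to the filtration quotients requires some care.
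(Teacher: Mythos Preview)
Your proposal is correct and follows essentially the same approach as the paper: freeness of $(\Cal{T}^{pl},\lbutcher)$ gives existence and uniqueness, the key identity $\sigma\searrow\tau=\sigma\lbutcher\tau+R$ with $R$ of strictly higher potential energy yields $Gr\,\Psi=\id$ by induction, and bijectivity is read off from unipotence on each homogeneous piece. The paper compresses the inductive step into one line, whereas you make explicit the energy formula $d(\sigma\searrow_v\tau)=d(\tau)+d(\sigma)+|\sigma|(h(v)+1)$ and the filtration bookkeeping needed when the inductive hypothesis is fed back through $\searrow$; this extra care is warranted and correctly handled.
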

\begin{proof}
The linear map $\Psi$ is uniquely determined by virtue of the universal property of the free magmatic algebra $(\Cal{T}^{pl},\lbutcher)$, and obviously respects the number of vertices. For any planar rooted trees $\sigma_1, \sigma_2$, the equality $\sigma_1 \searrow \sigma_2 = \sigma_1 \lbutcher\,\sigma_2 + \sigma'$ holds, with $\sigma' \in \Cal{T}^{(d(\sigma_1 \!\!\lbutcher\,\sigma_2)+1)}_{pl}$. Then, for $\sigma = \sigma_1 \lbutcher\,\sigma_2$, we have:
\begin{equation}
\Psi(\sigma) = \sigma + \sigma''\!,
\end{equation} 
with $\sigma'' \in \Cal{T}^{(d(\sigma)+1)}_{pl}$, which proves the Theorem.
\end{proof}
From Theorem \ref{D.K}, one can note that the matrix of $\,\Psi$ restricted to any homogeneous component $\Cal{T}^{pl}_{n}$ is upper triangular unipotent. More precisely, $c(\sigma,\tau)=0$ if the potential energy $d(\sigma)$ of $\sigma$ is strictly smaller than the potential energy of $\tau$, and $c(\sigma,\tau)=\delta^{\tau}_{\sigma}$ if $ d(\sigma)=d(\tau)$. We can calculate the sum of the entries of this matrix as follows: for any planar rooted tree $\sigma \in T^{pl}_n$, let $N(\sigma)$ be the number of trees (with the multiplicities) in $\Psi(\sigma)$. Let $\sigma=\sigma_1 \lbutcher \sigma_2$, where $\sigma_1 \in T^{pl}_p,\ \sigma_2 \in T^{pl}_q,\hbox{ such that } p+q=n, \hbox{ for } p,\ q \geq 1$. Since $\sigma_2$ has $q$  vertices, and from the definition of the left grafting product $"\searrow"$, we get that:
\begin{equation}\label{eq.4}
N(\sigma)=N(\sigma_1)\ N(\sigma_2)\ q.
\end{equation}

Now, define: 
\begin{equation}\label{eq.5}
N(T^{pl}_n)=\sum_{\sigma \in T^{pl}_n} {N(\sigma)},
\end{equation}
then using \eqref{eq.4}, we obtain that:
\begin{align*}
N(T^{pl}_n) &= \sum_ {{\scriptstyle p+q=n} \atop {\scriptstyle p,\ q \geq 1}}{\sum_ {{\scriptstyle \sigma_1 \in T^{pl}_p} \atop {\scriptstyle \sigma_2 \in T^{pl}_q}} {N(\sigma_1)\ N(\sigma_2)\ q}}\\
&= \sum_ {{\scriptstyle p+q=n} \atop {\scriptstyle p,\ q \geq 1}}\ q \left(\sum_ {\sigma_1 \in T^{pl}_p}{N(\sigma_1)}\right)\ \left(\sum_ {\sigma_2 \in T^{pl}_q}{N(\sigma_2)}\right) \\
&= \sum_ {{\scriptstyle p+q=n} \atop {\scriptstyle p,\ q \geq 1}} {N(T^{pl}_p)\ N(T^{pl}_q)\ q}.
\end{align*}
Here, we find some terms of the formula \eqref{eq.5}:
\begin{align*}
N(T^{pl}_1) &= N(T^{pl}_2) = 1\\
N(T^{pl}_3) &= N(T^{pl}_2)\ N(T^{pl}_1)\ 1 + N(T^{pl}_1)\ N(T^{pl}_2)\ 2 = 3\\ 
N(T^{pl}_4) &= N(T^{pl}_3)\ N(T^{pl}_1)\ 1 + N(T^{pl}_2)\ N(T^{pl}_2)\ 2 + N(T^{pl}_1)\ N(T^{pl}_3)\ 3 = 14\\
N(T^{pl}_5) &= N(T^{pl}_4)\ N(T^{pl}_1)\ 1 + N(T^{pl}_3)\ N(T^{pl}_2)\ 2 + N(T^{pl}_2)\ N(T^{pl}_3)\ 3 + N(T^{pl}_1)\ N(T^{pl}_4)\ 4 = 85.
\end{align*}

This is sequence $A088716$ in \cite{S}. The generating series $A(x):=\sum\limits_{n \geq 1}{a_nx^n}$, modulo the shift $a_n:=N(T^{pl}_{n+1})$, verifies the differential equation:
$$A(x)=1+xA(x)^2+x^2A(x)A'(x).$$
\begin{exam}
We display here the matrices $M_3$ and $M_4$ of the restrictions of $\Psi$ to the homogeneous components $\Cal{T}_{\!\!3}^{pl}$ and $\Cal{T}_{\!\!4}^{pl}$ respectively:

$$
M_3= \begin{pmatrix}
1 & 1  \\
0 & 1
\end{pmatrix}\ ,
\quad\  
M_4= \begin{pmatrix}
1&1&1&1&1\\
0&1&0&1&1\\
0&0&1&0&1\\
0&0&0&1&2\\
0&0&0&0&1
\end{pmatrix}.
$$
\end{exam}

\begin{cor}
$(\Cal{T}^{pl},\searrow)$ is another description of the free magmatic algebra.
\end{cor}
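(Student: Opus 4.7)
The plan is to transport freeness from $(\Cal{T}^{pl}, \lbutcher)$ along the isomorphism $\Psi$ given by Theorem \ref{D.K}. The candidate single generator is the one-vertex tree $\racine$, and Theorem \ref{D.K} together with Knuth's rotation correspondence $\Phi$ already gives us everything we need; the corollary is essentially a cosmetic restatement.

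First I would recall that $(\Cal{T}^{pl}, \lbutcher)$ is the free magmatic algebra on one generator $\racine$: this follows from \eqref{Phi}, since $\Phi: (\Cal{T}^{bin}_{\!pl}, \vee) \to (\Cal{T}^{pl}, \lbutcher)$ is a magmatic algebra isomorphism sending the trivial planar binary tree $\treelsmall$ to $\racine$, and $(\Cal{T}^{bin}_{\!pl}, \vee)$ is by construction the free magmatic algebra on one generator. Next I would observe that Theorem \ref{D.K} provides a linear bijection $\Psi: \Cal{T}^{pl} \to \Cal{T}^{pl}$ with $\Psi(\racine)=\racine$ satisfying
\begin{equation*}
\Psi(\sigma_1 \lbutcher \sigma_2) \;=\; \Psi(\sigma_1) \searrow \Psi(\sigma_2),
\end{equation*}
so $\Psi$ is a magma morphism, in fact an isomorphism, from $(\Cal{T}^{pl}, \lbutcher)$ onto $(\Cal{T}^{pl}, \searrow)$.

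The conclusion then follows by the universal property. Given any magmatic algebra $(M, \star)$ and any element $m \in M$, there exists a unique magma morphism $\varphi: (\Cal{T}^{pl}, \lbutcher) \to (M, \star)$ with $\varphi(\racine)=m$; setting $\widetilde\varphi := \varphi \circ \Psi^{-1}$ yields the unique magma morphism $(\Cal{T}^{pl}, \searrow) \to (M, \star)$ sending $\racine$ to $m$, because $\Psi^{-1}$ itself is a magma isomorphism $(\Cal{T}^{pl}, \searrow) \to (\Cal{T}^{pl}, \lbutcher)$ fixing $\racine$. This establishes the required universal property.

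I do not foresee any serious obstacle; the only thing to be careful about is that $\Psi$ be verified to be a genuine magma isomorphism (and not merely a linear bijection intertwining the two products on generators): this is immediate from the defining identity, extended by induction on the number of vertices, using that every element of $\Cal{T}^{pl}$ is a $\lbutcher$-product of smaller trees and that $\Psi$ is already known to be a linear bijection. No separate verification is needed that $\racine$ is a free generator, since this is inherited directly from the freeness on the $\lbutcher$-side via $\Psi$.
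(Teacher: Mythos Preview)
Your proposal is correct and matches the paper's approach: the paper states this corollary without proof, as an immediate consequence of Theorem~\ref{D.K}, and what you have written simply spells out the details---$\Psi$ is by construction a magma isomorphism from $(\Cal{T}^{pl},\lbutcher)$ onto $(\Cal{T}^{pl},\searrow)$ fixing the generator $\racine$, so freeness transports along it.
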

Here is the explicit expression of $\Psi$ and $\Psi^{-1}$ on the following planar rooted trees:
$$\table$$

Now, we shall review the (unpublished) joint work of K. Ebrahimi-Fard and D. Manchon on finding a formula for the coefficient $c(\sigma,\tau)$ of tree $\sigma$ in $\Psi(\tau)$, for any trees $\sigma$ and $\tau$ in $T^{pl}$. Let $\sigma$ be any planar rooted tree, and $v, w$ be two vertices in the set $V(\sigma)$ of its vertices, define a partial order "$<$" as follows: $v<w$ if there is a path from the root to $w$ through $v$. The root is the minimal element, and leaves are the maximal elements. Define a refinement "$\ll$" of this order to be the transitive closure of the relation $R$ defined by: $vRw$ if $v<w$, or both $v$ and $w$ are linked to a third vertex $u \in V(\sigma)$, such that $v$ lies on the right of $w$, like this: $\arbrebbLab$. A further refinement "$\lll$" on $V(\sigma)$ is the total order recursively defined as follows: $v \lll w$ if and only if $v \lll w$ inside $V(\sigma_1)$ or $V(\sigma_2)$, or $v \in V(\sigma_2)$ and $w \in V(\sigma_1)$, where $\sigma = \sigma_1 \lbutcher\,\sigma_2$.\\ \\

\begin{align*}
&\arbreebz
\end{align*}
\begin{align*}
& A\,planar\,rooted\,tree\,with\,its\,vertices\,labeled\,according\,to\,total\,order\,"\!\lll\!".
\end{align*}

\begin{thm}\label{thm of K. and M.}
For any planar rooted tree $\tau$ we have:
\begin{equation}
\Psi(\tau) = \sum_{\sigma \in T^{pl}}{c(\sigma,\tau)\sigma},
\end{equation}
where $c(\sigma,\tau)$ are nonnegative integers. An explicit expression for $c(\sigma,\tau)$ is given by the number of bijections $\psi:V(\sigma)\longrightarrow V(\tau)$ which are increasing from $(V(\sigma),\ll)$ to $(V(\tau),\lll)$, and such that $\psi^{-1}$ is increasing from $(V(\tau),<)$ to $(V(\sigma),<)$.
\end{thm}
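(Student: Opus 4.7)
My plan is to argue by induction on the number of vertices $|V(\tau)|$. The base case $\tau = \racine$ is immediate: $\Psi(\racine) = \racine$ gives $c(\sigma, \racine) = \delta_{\sigma, \racine}$, and on the combinatorial side the only candidate bijection forces $|V(\sigma)| = 1$ and satisfies both monotonicity conditions vacuously.

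For the inductive step I would write $\tau = \tau_1 \lbutcher \tau_2$ and combine the defining relation for $\Psi$ with the expansion $\sigma_1 \searrow \sigma_2 = \sum_{v \in V(\sigma_2)} \sigma_1 \searrow_v \sigma_2$ to obtain
\[
c(\sigma, \tau) \;=\; \sum_{(\sigma_1, \sigma_2, v) \,:\, \sigma_1 \searrow_v \sigma_2 = \sigma} c(\sigma_1, \tau_1)\, c(\sigma_2, \tau_2).
\]
By the induction hypothesis each factor is the corresponding bijection count $N(\sigma_i,\tau_i) \in \NN$, so $c(\sigma,\tau) \in \NN$ for free. It then suffices to establish the parallel identity combinatorially: a bijection between valid maps $\psi : V(\sigma) \to V(\tau)$ and triples consisting of a decomposition $\sigma = \sigma_1 \searrow_v \sigma_2$ together with valid maps $\psi_1$ for $(\sigma_1,\tau_1)$ and $\psi_2$ for $(\sigma_2,\tau_2)$.

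The heart of the argument is the forward direction. Given a valid $\psi$, I would set $v_1 := \psi^{-1}(r_1)$ with $r_1$ the root of $\tau_1$, and read off the decomposition from $v_1$. The $<$-$<$ increase of $\psi^{-1}$ forces $v_1 <_\sigma \psi^{-1}(b)$ for every $b \in V(\tau_1)$, while for any $<_\sigma$-descendant $y$ of $v_1$ we have $v_1 \ll_\sigma y$, hence $r_1 \lll_\tau \psi(y)$, and $V(\tau_2) \lll V(\tau_1)$ then forces $\psi(y) \in V(\tau_1)$; together these identify $\psi^{-1}(V(\tau_1))$ with the subtree of $\sigma$ rooted at $v_1$. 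Moreover $v_1$ must be the leftmost child of its parent $v$: a hypothetical left-sibling $c$ would satisfy $v_1 \ll c$ by the sibling clause, yet would lie in $\psi^{-1}(V(\tau_2))$, producing the impossible chain $r_1 \lll \psi(c) \lll r_1$. Hence $\sigma$ factors as $\sigma_1 \searrow_v \sigma_2$ with $\sigma_1$ the subtree at $v_1$, and a routine check that $\ll_\sigma$, $<_\sigma$, $\lll_\tau$, $<_\tau$ all restrict to the intrinsic orders on the respective components (the chains defining these orders cannot escape a left-grafted subtree) ensures that the restrictions $\psi_1, \psi_2$ are valid. The backward direction simply glues $\psi_1$ and $\psi_2$: the in-component conditions pull back directly, and the only cross-condition reduces to $V(\tau_2) \lll V(\tau_1)$ together with the observation that the sole $<_\tau$-relation crossing the two parts is root$(\tau)$ being a common ancestor of $V(\tau_1)$.

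The main obstacle I anticipate lies in this forward direction: pinning down that $v_1$ sits at the root of a \emph{leftmost} subtree uses both monotonicity conditions in a nontrivial way. The $\ll$-condition by itself does not distinguish leftmost children from other children, while reflection of $<$ only constrains the tree-order; it is the conjunction of the two (via the sibling clause in the definition of $\ll$) that localises the cut to a leftmost child and thereby matches the structure produced by $\searrow_v$. The order-restriction lemma used in the compatibility step is the other piece I would need to verify carefully, but it is essentially bookkeeping once one notes that both $R$-moves — descending and moving to a left sibling — cannot exit a left-grafted subtree.
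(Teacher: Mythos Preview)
Your proposal is correct and follows essentially the same route as the paper: induction on $|V(\tau)|$, decomposition $\tau=\tau_1\lbutcher\tau_2$, the recursion $c(\sigma,\tau)=\sum_v c(\sigma^v,\tau_1)c(\sigma_v,\tau_2)$, and the matching splitting of a valid $\psi$ along $V_i=\psi^{-1}(V(\tau_i))$. The paper packages your ``$v_1$ is the leftmost child of its parent'' step in the language of elementary left admissible cuts, but the content is identical; your explicit verification of that point and of the order-restriction bookkeeping is in fact a bit more detailed than what the paper writes down.
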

\begin{proof}
This Theorem is proved using the induction on the degree $n$ of trees. The proof is trivial for $n=1, 2$. Given any planar rooted trees $\sigma, \tau \in T^{pl}_n$, such that $\tau$ can be written in a unique way as $\tau=\tau_1\lbutcher \tau_2$, we have:
\begin{equation}
c(\sigma,\tau_1\lbutcher \tau_2)=\sum_{v\in V(\sigma)}{c(\sigma^v,\tau_1)c(\sigma_v,\tau_2)},
\end{equation}
where $\sigma^v$ is the leftmost branch of $\sigma$ starting from $v$, and $\sigma_v$ is the corresponding trunk, i.e. what remains when the branch $\sigma ^ v$ is removed. This is immediate from the following computation:
\begin{align*}
\Psi(\tau)&=\Psi(\tau_1\lbutcher \tau_2)  \\
&= \Psi(\tau_1)\searrow \Psi(\tau_2)\\
&= \sum_{\sigma'\!\!,\,\sigma''\in T^{pl}}{c(\sigma',\tau_1)c(\sigma'',\tau_2)\,\sigma'\searrow \sigma''}.
\end{align*}
Denote by $b(\sigma,\tau)$ the number of bijections from $V(\sigma)$ onto $V(\tau)$ satisfying the growing conditions of Theorem \ref{thm of K. and M.}. Let $\psi$ be an increasing bijection from $(V(\sigma),\ll)$ to $(V(\tau),\lll)$, the decomposition $\tau=\tau_1\lbutcher \tau_2$ defines a partition of $V(\sigma)$ into two parts $V_i=\psi^{-1}(V(\tau_i)),\,i=1, 2$ such that $V_2\!\ll\!V_1$, which means that for any $v\in V_1$ and $w\in V_2$, either $w\!\ll\!v$ or they are incomparable. Such partitions are nothing but left admissible cuts \cite{HW08}. Put $\sigma_{V_1}$ and $\sigma_{V_2}$ to be the corresponding pruning and the trunk respectively.\\

As the inverse $\psi^{-1}$ moreover respects the order "$<$", there is a unique minimal element in $V_1$ for "$<$", namely $\psi^{-1}(v_1)$ where $v_1$ is the root of $\tau_1$. This means that the left cut considered here is also elementary, i.e. the pruning $\sigma_{V_1}$ is a tree. It is then clear that the restriction $\psi_i$ of $\psi$ to $\sigma_{V_i}$ is a bijection from $V(\sigma_{V_i})$ onto $V(\tau_i)$ which respects the growing conditions of the Theorem, for $i=1, 2$. Conversely, any vertex $v$ of $\sigma$ defines an elementary left cut by taking the leftmost branch $\sigma^v$ starting from $v$ and the corresponding trunk $\sigma_v$, and if $\psi':V(\sigma') \longrightarrow V(\tau_1)$ and $\psi'':V(\sigma'') \longrightarrow V(\tau_2)$ are two bijections satisfying the growing conditions of the Theorem, then the bijection $\psi:V(\sigma)\longrightarrow V(\tau)$ obtained from $\psi'$ and $\psi''$ also satisfies these conditions. Thus, we arrive that:
\begin{equation}\label{eq. of b}
b(\sigma,\tau_1\lbutcher \tau_2)=\sum_{v\in V(\sigma)}{b(\sigma^v,\tau_1)b(\sigma_v,\tau_2)},
\end{equation}
hence, the coefficients $c(-,-)$ and $b(-,-)$ satisfy the same recursive relations. This ends the proof of Theorem \ref{thm of K. and M.}. 
\end{proof}

\begin{exam}
$c(\arbreccc,\arbrecd)= 2$ according to the table above. Let us name the vertices as follows:
$$ \arbrecccLab\,,\,\,\,\,\,\,\,\,\ \arbrecdLab.$$

Take $\varphi\!:\!V(\arbreccc) \rightarrow V(\arbrecd)$ be a bijective map. \!\!We have $v_1\ll v_3,\ v_1\ll v_2\ll v_4,\ v_2\ll v_3$, as well as $w_1\lll w_2\lll w_3\lll w_4$. The growing conditions of Theorem \ref{thm of K. and M.} impose:
$$\varphi(v_1)=w_1,\ \varphi(v_2)\lll \varphi(v_3).$$
Hence we have:
\begin{align*}
\varphi(v_1)= w_1 & \hskip 10mm\varphi(v_1)= w_1\\
\varphi(v_2)= w_2 & \hskip 10mm\varphi(v_2)= w_2\\
\varphi(v_3)= w_3 & \hskip 2mm\hbox{ or } \hskip 2mm\varphi(v_3)= w_4\\
\varphi(v_4)= w_4 & \hskip 10mm\varphi(v_4)= w_3
\end{align*}
The inverse of both bijections obviously respect the order $"<"$. Hence we find two bijections verifying the growing conditions of Theorem \ref{thm of K. and M.}, thus recovering $b(\arbreccc, \arbrecd)= 2$.
 
\end{exam}

\subsection{From planar to non-planar rooted trees}
Corresponding to the coefficients $c(\sigma,\tau)$, with their explicit expressions, in the matrix of the restriction of the linear map $\Psi$ to any homogeneous component $\Cal{T}^{pl}_n\!,$ we try to find a similar expression in the non-planar case: in other words, we build up and explicitly describe the map $\widetilde{\Psi}_S$ in the diagram of the introduction.\\

\begin{defn}
The grafting product $"\to"$ is a bilinear map defined on the vector space $\Cal{T}$such that:
\begin{equation}
s \to t =\sum_{v\,\in V(t)}{ s \to_v t},
\end{equation}
for any $s, t\!\in\!\Cal{T}\!$, where "$s \to_v t$" is the (non-planar) rooted tree obtained by grafting the tree $s$ on the vertex $v$ of the tree $t$.  
\end{defn}

\begin{exam}
$$\racine\to\arbrea=\arbreba+\arbrebb,\hskip 8mm \arbrea\to\arbrea=\arbreca+\arbrecc.$$
\end{exam}

The space $\Cal{T}\!$, with this type of grafting, forms a special structure of algebra called pre-Lie algebra, as it will be shown later in Section \ref{free pre-lie algebra}. Recall that the symmetry factor of any (non-planar) rooted tree $s$ is the number $sym(s)$ of all automorphisms $\Theta\!:\!V(s)\!\longrightarrow\!V(s)$ which are increasing from $(V(s),<)$ onto $(V(s),<)$. This definition is equivalent to the recursive definition in \cite{C.B04}.\\

Let $\overline{\Psi}= \pi \circ \Psi$ be the linear  map from $\Cal{T}^{pl}$ onto $\Cal{T}$, where $\pi$ is the "forget planarity" projection. 
\diagrama{
\xymatrix{
\Cal {T}^{pl}\ar[r]^{\Psi}\ar[dr]|{\overline{\Psi}}
&\Cal T^{pl}\ar@{->>}[d]^{\pi}\\
& \Cal T}}
Obviously, $\overline{\Psi}$ is a morphism of algebras from $(\Cal{T}^{pl},\lbutcher)$ into $(\Cal{T},\to)$. One of the important results obtained in this article is the following:

\begin{thm}\label{main}
Let $\tau$ be any planar rooted tree, we have:
\begin{equation}
\overline{\Psi}(\tau)=\sum_{s \in T}{\alpha(s,\tau)s},
\end{equation}
where $\alpha(s,\tau)$ are nonnegative integers. The coefficients $\alpha(s,\tau)$ coincide with the numbers $\overline{b}(s,\tau) = \tilde{b}(s,\tau)/sym(s) $, where $sym(s)$ is the symmetry factor of $s$ described above and $\tilde{b}(s,\tau)$ is the number of bijections $ \varphi : V(s)\longrightarrow V(\tau) $ which are increasing from $(V(s),<)$ into $(V(\tau),\lll)$, such that $\varphi^{-1}$ is increasing from $(V(\tau),<)$ into $(V(s),<)$. 
\end{thm}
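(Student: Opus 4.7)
The plan is to deduce Theorem~\ref{main} directly from Theorem~\ref{thm of K. and M.} by applying the projection $\pi$ and regrouping the resulting sum according to the non-planar class $s=\pi(\sigma)$. From $\Psi(\tau)=\sum_{\sigma\in T^{pl}}c(\sigma,\tau)\sigma$ and linearity of $\pi$, one obtains
$$\overline{\Psi}(\tau)=\sum_{s\in T}\alpha(s,\tau)\,s,\qquad \alpha(s,\tau)=\sum_{\sigma\in\pi^{-1}(s)}c(\sigma,\tau).$$
Non-negativity is automatic since each $c(\sigma,\tau)\geq 0$ by Theorem~\ref{thm of K. and M.}, so the remaining task is the combinatorial identity $sym(s)\cdot\alpha(s,\tau)=\tilde{b}(s,\tau)$.

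Fix a vertex set $V(s)$ for a representative of $s$, and let $P(s)$ denote the set of planar structures on this labeled tree, i.e.\ assignments of a left-to-right order to the children at each vertex; each $\sigma\in P(s)$ shares the vertex set $V(s)$ and its planar isomorphism class lies in $\pi^{-1}(s)$. The core step is the bijection
$$\bigl\{\varphi:V(s)\to V(\tau)\ \text{counted by}\ \tilde{b}(s,\tau)\bigr\}\ \longleftrightarrow\ \bigsqcup_{\sigma\in P(s)}\bigl\{\psi:V(\sigma)\to V(\tau)\ \text{counted by}\ c(\sigma,\tau)\bigr\}.$$
Given $\varphi$, define $\sigma(\varphi)\in P(s)$ by requiring the children of each vertex to be ordered so that their $\varphi$-images \emph{decrease} in $(V(\tau),\lll)$ from left to right; this is unambiguous since $\lll$ is total and $\varphi$ is injective. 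Set $\psi:=\varphi$. Conversely, given $(\sigma,\psi)$, set $\varphi:=\psi$. Because $\ll_{\sigma}$ is the transitive closure of $<$ together with the relation ``right-sibling is smaller,'' the fact that $\varphi$ is $<$-to-$\lll$ increasing combined with our sibling-ordering rule forces $\psi$ to be $\ll_{\sigma(\varphi)}$-to-$\lll$ increasing; in the other direction, if $\psi$ is already $\ll_{\sigma}$-to-$\lll$ increasing then the leftmost-largest rule is automatic, so $\sigma(\psi)=\sigma$. The inverse condition is preserved tautologically since $\psi^{-1}=\varphi^{-1}$ and $\ll$ refines $<$.

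To pass from $P(s)$ to $\pi^{-1}(s)$, note that $\mathrm{Aut}(s)$ acts on $P(s)$ by its action on $V(s)$, and the orbits are exactly the classes in $\pi^{-1}(s)$. This action is free: any automorphism of $s$ preserving a planar structure would give a non-trivial planar automorphism of the corresponding element of $T^{pl}$, which is impossible by rigidity of planar rooted trees. Since $c(\sigma,\tau)$ is constant along each orbit (it depends only on the planar iso class), we obtain
$$\tilde{b}(s,\tau)=\sum_{\sigma\in P(s)}c(\sigma,\tau)=|\mathrm{Aut}(s)|\sum_{\sigma\in\pi^{-1}(s)}c(\sigma,\tau)=sym(s)\cdot\alpha(s,\tau),$$
which gives the stated formula $\alpha(s,\tau)=\overline{b}(s,\tau)=\tilde{b}(s,\tau)/sym(s)$. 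The main obstacle is the verification, in the bijection step, that the reconstructed planar structure $\sigma(\varphi)$ really makes $\varphi$ increasing for the full refinement $\ll$ and not merely for its generators; this is most cleanly handled by induction on the number of vertices, splitting $\sigma(\varphi)=\sigma_1\lbutcher\sigma_2$ according to the unique leftmost-branch decomposition and tracing how the orders $\ll$, $\lll$ restrict to the subtrees, in parallel with the recursion used in the proof of Theorem~\ref{thm of K. and M.}.
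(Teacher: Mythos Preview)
Your argument is correct and shares its central idea with the paper's proof: both start from $\alpha(s,\tau)=\sum_{\pi(\sigma)=s}c(\sigma,\tau)$, and both associate to a bijection $\varphi\in\widetilde B(s,\tau)$ the planar structure on $V(s)$ obtained by ordering siblings according to their $\lll$-images under $\varphi$. The packaging differs slightly. The paper expresses the $sym(s)$-factor through the equivalence $\varphi^{!}=\psi^{!}\Leftrightarrow\varphi=\psi\circ\gamma$ for $\gamma\in\mathrm{Aut}(s)$, and then finishes by showing that $\alpha(-,-)$ and $\overline b(-,-)$ satisfy the same recursion under $\tau=\tau_1\lbutcher\tau_2$, parallel to the proof of Theorem~\ref{thm of K. and M.}. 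You instead count directly via the free $\mathrm{Aut}(s)$-action on the set $P(s)$ of planar structures on the labelled tree, which bypasses the induction entirely and is a bit more streamlined. One small remark: the ``main obstacle'' you flag at the end is in fact harmless, since once $\varphi$ is increasing on the generators of $\ll_{\sigma(\varphi)}$ (the tree edges and the sibling pairs, both of which you verify), monotonicity on the full transitive closure follows automatically because the target order $\lll$ is total and hence transitive; no further induction is needed there.
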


\begin{proof}
Note that the restriction of $\overline{\Psi}$ to any homogeneous component $\Cal{T}^{pl}_ n $ reduces the square matrix of the coefficients $c(\sigma,\tau)$ to a rectangular matrix $[\![\alpha(s,\tau)]\!]_{\scriptstyle s \in T_n ,\, \tau \in T^{pl}_n}$. For any planar rooted tree $\tau$, we have:
\begin{equation}\label{alpha}
\alpha(s,\tau)=\sum_ {{\scriptstyle \sigma \in T^{pl}} \atop {\scriptstyle \pi(\sigma)=s}}{c(\sigma,\tau)},
\end{equation}                            
where $s$ is a (non-planar) rooted tree. We prove Theorem \ref{main} using the induction on the degree of trees. The proof is trivial in the cases $n=1,2$. Let $\tau \in T^{pl} _n$, with $\tau = \tau_1 \lbutcher \tau_2$, then:
\begin{equation}\label{alphabutcher}
\alpha(s,\tau_1 \lbutcher \tau_2)=\!\!\!\!\!\!\!\!\sum_{\genfrac{}{}{0pt}{}{\scriptstyle \sigma \in T^{pl}} {\scriptstyle \pi(\sigma)=s ,\, v \in V(\sigma)}}\!\!\!\!\!\!\!\!{c(\sigma^v,\tau_1)c(\sigma_v,\tau_2)},
\end{equation}
which is immediate from \eqref{alpha}, where $\sigma^v$ is the leftmost branch of $\sigma$ starting from $v$, and $\sigma_v$ is the corresponding trunk.\\

Now, let $s$ be any (non-planar) rooted tree in $T_n$ and $ \varphi : V(s)\rightarrow V(\tau) $ be a bijection which satisfies the growing conditions given in Theorem \ref{main}. Then we can define from these conditions a structure of poset on the set $V(s)$ of vertices of $s$ , as follows: for $v,w \in V(s)$, $vRw$ if and only if $v<w$ or there is $u \in V(s)$ such that each of $v$ and $w$ are related with $u$ by an edge, and $\varphi(v) \lll \varphi(w)$. We denote by $\ll_{\varphi}$ the transitive closure of the relation $R$.\\

This structure determines a planar rooted tree $\sigma$ such that $\pi(\sigma)=s$, with the associated partial order $\ll$ on the set $V(\sigma)$ of vertices of $\sigma$, together with a poset isomorphism $\vartheta:(V(\sigma),\ll) \rightarrow (V(s),\ll_{\varphi})$, which in turn defines a bijection $\varphi^!:=\varphi \circ \vartheta:V(\sigma)\rightarrow V(\tau)$, which is increasing from $(V(\sigma),\ll)$ into $(V(\tau),\lll)$, such that ${\varphi^{!}}^{-1}$ is increasing from $(V(\tau),<)$ into $(V(\sigma),<)$. The planar rooted tree $\sigma$ is unchanged if we replace $\varphi$ by $\varphi \circ \vartheta'$ with $\vartheta'\!\!\in\!Aut(s)$. Moreover, for any $\varphi, \psi:V(s) \rightarrow V(\tau)$ satisfying the growing conditions of Theorem \ref{main}, we have: 
\begin{equation}\label{Aut}
\varphi^! = \psi^! \Leftrightarrow \varphi = \psi \circ \gamma, \hbox{ for } \gamma \in Aut(s). 
\end{equation}
Indeed, if $\varphi^! = \psi^!$, then $\gamma:= \psi^{-1} \circ \varphi: V(s) \rightarrow V(s)$ is a bijection which respects the partial order $"<"$, hence an element of $Aut(s)$, such that $\varphi = \psi \circ \gamma$. The inverse implication is obvious.\\

Let $\widetilde{B}(s, \tau)$ (respectively $B(\sigma, \tau)$) be the set of bijections $\varphi:V(s) \rightarrow V(\tau)$ (respectively $\psi:V(\sigma) \rightarrow V(\tau)$) verifying the growing conditions of Theorem \ref{main} (respectively Theorem \ref{thm of K. and M.}), and suppose $\pi(\sigma)=s$. Denote by $\tilde{b}(s, \tau)$ (respectively $b(\sigma, \tau)$) the cardinal number of $\widetilde{B}(s, \tau)$ (respectively $B(\sigma, \tau)$). Now, define : 

\begin{equation}
\overline{b}(s,\tau):= \!\!\!\!\!\!\!\!\sum_{\scriptstyle \sigma \in T^{pl}\!,\,\pi(\sigma)=s}{\!\!\!\!\!\!\!\!b(\sigma,\tau)}.
\end{equation}

Then, according to \eqref{Aut}, we have: 
$$\overline{b}(s,\tau)=\widetilde{b}(s,\tau)/sym(s).$$
We also have for $\tau=\tau_1 \lbutcher \tau_2$: 
\begin{equation}\label{overline b butcher}
\overline{b}(s,\tau_1 \lbutcher \tau_2)= \!\!\!\!\!\!\!\!\sum_{{\scriptstyle \sigma \in T^{pl}} \atop {\scriptstyle \pi(\sigma)=s,\, v \in V(\sigma)}}{\!\!\!\!\!\!\!\!b(\sigma^v,\tau_1)b(\sigma_v,\tau_2)}.
\end{equation}

The coefficients $c(-,-)$ and $b(-,-)$ coincide by Theorem \ref{thm of K. and M.}. So, from \eqref{alphabutcher} and \eqref{overline b butcher}, the coefficients $\alpha(-,-)$ and $\overline{b}(-,-)$ satisfy the same recursive relations, which proves the Theorem.        
\end{proof}
\begin{exam}
$\alpha(\arbrebb, \arbrebb)= 1$ in the formula of $\,\overline{\Psi}(\arbrebb)$. Name the vertices as follows:
$$ \arbrebbLabel\,,\,\,\,\,\,\,\,\,\ \arbrebbLabe.$$

Let $\psi\!:\!V(\arbrebb) \rightarrow V(\arbrebb)$ be a bijective map. \!\! We have $v_1 < v_2,\ v_1 < v_3$, as well as $w_1 \lll w_2 \lll w_3$. The growing conditions of Theorem \ref{main} impose $\psi(v_1)=w_1$. Hence we have:
\begin{align*}
\psi(v_1)= w_1  & \hskip 10mm\psi(v_1)= w_1 \\
\psi(v_2)= w_2  & \hskip 2mm\hbox{ or } \hskip 2mm\psi(v_2)= w_3 \\
\psi(v_3)= w_3  & \hskip 10mm\psi(v_3)= w_2
\end{align*}
The inverse of these bijections obviously respect the order $"<"$. Hence we find two bijections verifying the growing conditions of Theorem \ref{main}, thus $\tilde{b}(\arbrebb, \arbrebb)= 2$, but $sym(\arbrebb)=2$, then we obtain $\overline{b}(\arbrebb, \arbrebb)=1$. 
\end{exam}

We want to describe a family of linear isomorphisms $\widetilde{\Psi}:\Cal T \longrightarrow \Cal T\!,$ which make the following diagram commute:
\diagrama{
\xymatrix{\Cal {T}^{pl}\ar[r]^{\Psi}\ar[dr]|{\overline{\Psi}}\ar@{->>}[d]_{\pi}
&\Cal T^{pl}\ar@{->>}[d]^{\pi}\\
\Cal T\ar@{.>}[r]_{\widetilde\Psi} & \Cal T
}}
For any (non-planar) rooted tree $t$, choose $\sigma=S(s)$ to be a planar rooted tree with $\pi(\sigma)=s$. This defines a section $S:\Cal T \longrightarrow \Cal T^{pl}$ of the projection $\pi$, (i.e.) $\pi \circ S =Id_{\Cal T}$. One can note that the map $S$ is not unique, for example, if $n=4$, we have:\\
$T_4=\{\arbreca, \!\arbrecb\!, \arbrecc\!, \arbrecd\}$ and $T^{pl}_4= \{\arbreca, \!\arbrecb\!, \arbrecc\!, \arbreccc\!, \arbrecd\}$, then we can define $S$ as:\\
$S(\arbreca)= \arbreca,\,S(\!\arbrecb\!)=\!\arbrecb\!,\,S(\arbrecd)=\!\arbrecd\!,$ and one can choose for $S(\arbrecc)$ between $\arbrecc$ and $\arbreccc\!.$
\\

Let $S$ be a section of $\pi$. Define $\widetilde{\Psi}_S:=\overline{\Psi} \circ S$ to be the linear map from $\Cal T$ into $\Cal T\!,$ which makes the following diagram commute:
\diagrama{
\xymatrix{
\Cal {T}^{pl}\ar[r]^{\Psi}\ar[dr]|{\overline{\Psi}}\ar@{->>}[d]^{\pi}
&\Cal T^{pl}\ar@{->>}[d]^{\pi}\\
\Cal T \ar@{.>}[r]_{{\widetilde\Psi}_S}\ar@<6pt>@{.>}[u]^S & \Cal T}} 
 
\begin{cor}\label{cor 5}
For any (non-planar) rooted tree $t$, we have:
\begin{equation}
\widetilde{\Psi}_S(t) = \sum_{s \in T}{\beta_S(s, t)s},
\end{equation}
where $\beta_S(s, t)$ are nonnegative integers. The coefficients $\beta_S(s, t)$, which depend on the section map $S\!,$ can be expressed by the number $\overline{b}(s,\tau) = \widetilde{b}(s,\tau)/sym(s)$ described in Theorem \ref{main}, with $\tau=S(t)$.
\end{cor}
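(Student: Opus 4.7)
The plan is to derive the corollary as an essentially immediate consequence of Theorem \ref{main}, since by construction $\widetilde{\Psi}_S = \overline{\Psi} \circ S$ evaluates a non-planar tree by first lifting it to a chosen planar representative and then applying $\overline{\Psi}$. The only thing that needs verification is that the coefficients appearing after the lift are exactly the $\overline{b}$'s computed in Theorem \ref{main}, and that the resulting sum is again indexed by (non-planar) rooted trees with nonnegative integer coefficients.

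First, I would fix a section $S:\Cal T \longrightarrow \Cal T^{pl}$ of the surjection $\pi$, let $t\in T$ be an arbitrary non-planar rooted tree, and set $\tau := S(t) \in T^{pl}$, so that $\pi(\tau)=t$. By the very definition of $\widetilde{\Psi}_S$, one has
\begin{equation*}
\widetilde{\Psi}_S(t) \;=\; \overline{\Psi}(S(t)) \;=\; \overline{\Psi}(\tau).
\end{equation*}
At this point I invoke Theorem \ref{main}, which asserts that
\begin{equation*}
\overline{\Psi}(\tau) \;=\; \sum_{s\in T} \alpha(s,\tau)\, s \;=\; \sum_{s\in T}\overline{b}(s,\tau)\, s,
\end{equation*}
with $\overline{b}(s,\tau)=\widetilde{b}(s,\tau)/\mathrm{sym}(s)$ a nonnegative integer. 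Setting $\beta_S(s,t):=\overline{b}(s,S(t))$ then yields the desired expansion
\begin{equation*}
\widetilde{\Psi}_S(t) \;=\; \sum_{s\in T}\beta_S(s,t)\, s,
\end{equation*}
and each $\beta_S(s,t)$ is a nonnegative integer for the same reason as $\overline{b}$ itself.

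The only subtle point worth checking explicitly is the well-definedness of the coefficients in terms of $t$ alone once the section $S$ has been fixed: different choices of $S$ produce different planar lifts and, in general, different coefficients, so the notation $\beta_S(s,t)$ (rather than $\beta(s,t)$) is essential. This is already reflected in the statement, so no further argument is needed. There is no substantial obstacle here, as the corollary is a direct unpacking of the definition $\widetilde{\Psi}_S = \overline{\Psi}\circ S$ combined with the formula for $\overline{\Psi}$ proved in Theorem \ref{main}; all the combinatorial work has already been carried out there.
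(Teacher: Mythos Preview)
Your proof is correct and follows essentially the same approach as the paper: apply the definition $\widetilde{\Psi}_S=\overline{\Psi}\circ S$, set $\tau=S(t)$, and invoke Theorem \ref{main} to identify $\beta_S(s,t)$ with $\alpha(s,\tau)=\overline{b}(s,\tau)$. The paper's proof additionally remarks that the resulting matrix $[\![\beta_S(s,t)]\!]_{s,t\in T_n}$ is upper triangular unipotent (since $\widetilde{\Psi}_S(t)=t+\text{higher-energy terms}$), but this is extra information not required by the corollary's statement.
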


\begin{proof}
Note that the restriction of $\,\widetilde{\Psi}_S$ to any homogeneous component $\Cal{T}_n$ reduces the matrix of the coefficients $\alpha(s,\tau)$ to a upper triangular unipotent matrix $[\![\beta_S(s, t)]\!]_{\scriptstyle\,s,\,t \in T_n}$. Let $t$ be any (non-planar) rooted tree, and let us choose the section map $S$ such that $S(t)=\tau$ is a planar rooted tree, then:
$$\widetilde{\Psi}_S(t) = \overline{\Psi}(\tau) = \sum_{s \in T}{\alpha(s,\tau)s},$$
which means that the coefficients $\beta_S(s, t)$ and $\alpha(s,\tau)$ are the same. Hence, it can be expressed by the number $\overline{b}(s,\tau)$ in the same way than the coefficients $\alpha(s,\tau)$. From Theorem \ref{main}, we found that the restriction of $\overline{\Psi}$ to any homogeneous component $\Cal T^{pl}_n$ reduces the matrix of the coefficients $c(\sigma,\tau)$ to a rectangular matrix. Now, the restriction of $\widetilde{\Psi}_S$ to any homogeneous component $\Cal T_n$ can be represented by the restriction of $\overline{\Psi}$ on the component $S(\Cal T_n)$ (this representation depends on the section map $S$), which means that the matrix of $\beta_S(s, t)$'s is a upper triangular unipotent matrix, because we have:
$$\widetilde{\Psi}_S(t) =t+\hbox{ terms of higher energy}.$$
\end{proof}

\section{Free Pre-Lie algebras}\label{free pre-lie algebra}

The concept of "Pre-Lie algebras" appeared in many works under various names. The first appearance of this notion can be traced back to 1857 in a paper by A. Cayley \cite{A.C57}. In 1961, J. L. Koszul studied this type of algebras in his work \cite{J.K61}. E. B. Vinberg and M. Gerstenhaber in 1963 independently presented  the concept under two different names; "right symmetric algebras" and "pre-Lie algebras" respectively \cite{E.V63, M.G63}. Other denominations, e.g. "Vinberg algebras", appeared since then. "Chronological algebras" is the term used by A. Agrachev and R. V. Gamkrelidze in their work on \textsl{nonstationary vector fields} \cite{AR81}. The term "pre-Lie algebras" is now the standard terminology. We shall now review some basics and topics related to pre-Lie algebras.

\begin{defn}

Let $\Cal{A}$ be a vector space over a field $K$ together with a bilinear operation ``$\vartriangleright$''. Then $\Cal{A}$ is said to be a left pre-Lie algebra, if the map $\vartriangleright$ satisfies the following identity:
\begin{equation}\label{pre-lie identity}
(x \vartriangleright y)\vartriangleright z - x \vartriangleright  (y \vartriangleright z) = (y \vartriangleright x)\vartriangleright z - y \vartriangleright  (x \vartriangleright z), \forall x,y,z \in \Cal{A}.
\end{equation}   
\end{defn}
The identity \eqref{pre-lie identity} is called the left pre-Lie identity, and it can be written as:
\begin{equation}
L_{[x,y]} = [L_x,L_y], \forall x,y \in \Cal{A},
\end{equation}      
where for every element $x$ in $\Cal A$, there is a linear transformation $L_x$ of the vector space $\Cal A$ defined by $L_x(y)={x}\vartriangleright{y}, \forall y\!\in\!\Cal A$, and $[x,y]={x}\vartriangleright{y}-{y}\vartriangleright{x}$ is the commutator of the elements $x$ and $y$ in $\Cal A$. The usual commutator $[L_x,L_y]=L_xL_y-L_yL_x$ of the linear transformations of $\Cal A$ defines a structure of Lie algebra over $K$ on the vector space $L(\Cal A)$ of all linear transformations of $\Cal A$. For any pre-Lie algebra $\Cal{A}$, the bracket $[-,-]$ satisfies the Jacobi identity, hence induces a structure of Lie algebra on $\Cal{A}$.\\

In the vector space  $\Cal T$ spanned by the rooted trees, the grafting operation $"\to"$ satisfies the pre-Lie identity, since for any $s, t, t'\!\in T\!,$ we have:
\begin{align*}
s \rightarrow ( t \rightarrow t') - (s \rightarrow t) \rightarrow t' &= s \rightarrow(\sum_{v\in V(t')}{t \rightarrow_{v}t'}) - (\sum_{u \in V(t)}{s \rightarrow_{u}t}) \rightarrow t'\\
&= \sum_{v \in V(t')}{s \rightarrow(t \rightarrow_{v}t')} - \sum_{u \in V(t)}{(s \rightarrow_{u}t)} \rightarrow t'\\
&= \sum_{v \in V(t')}{\sum_{v'\in V(t'')}{s \rightarrow_{v'}(t \rightarrow_{v}t')}} \\
& \hskip 5mm - \sum_{v \in V(t')}{\sum_{u \in V(t)}{(s \rightarrow_{u}t)} \rightarrow_{v}t'}, \,\,\, [t'' = t \rightarrow_{v}t']\\ 
&= \sum_{v \in V(t')}{\sum_{v' \in V(t')}{s \rightarrow_{v'}(t \rightarrow_{v}t')}},
\end{align*}

Obviously symmetric in $s$ and $t$.\\

Free pre-Lie algebras have been handled in terms of rooted trees by F. Chapoton and M. Livernet \cite{CL01}, who also described the pre-Lie operad explicitly, and by A. Dzhumadil'daev and C. L\"ofwall independently \cite{AC02}. For an elementary version of the approach by Chapoton and Livernet without introducing operads, see e.g. \cite[Paragraph 6.2]{D.M}:

\begin{thm}\label{d-generators}
Let $k$ be a positive integer. The free pre-Lie algebra with $k$ generators is the vector space $\Cal T$ of (non-planar) rooted trees with $k$ colors, endowed with grafting.
\end{thm}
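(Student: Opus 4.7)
The proof amounts to verifying the universal property: for any pre-Lie algebra $(\Cal{A}, \vartriangleright)$ and any set-map $f:E\to\Cal{A}$, where $E$ is the $k$-element set of generators identified with the single-vertex trees in $\Cal{T}$, there must exist a unique pre-Lie morphism $\widetilde{f}:(\Cal{T},\to)\to(\Cal{A},\vartriangleright)$ extending $f$. That $(\Cal{T},\to)$ is itself pre-Lie in the $k$-colored setting is the same vertex-by-vertex computation already displayed for one color, since $\to$ sums over vertices of the target tree irrespective of their colors.

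For surjectivity of any such candidate morphism, and hence uniqueness, one must show that every $t\in\Cal{T}$ lies in the subalgebra generated under $\to$ by the single-vertex trees. This is organized cleanly through the Agrachev-Gamkrelidze monomial construction to be revisited in Section \ref{free pre-lie algebra}: to each tree $t$ one associates a grafting monomial $m_t$ in the generators whose ``lower-energy term'' in the sense of Section \ref{section 1} is $t$ itself. Choosing a section $S$ of the projection $\pi:\Cal{T}^{pl}\twoheadrightarrow\Cal{T}$ fixes such an assignment, and the unipotence of the matrix $[\![\beta_S(s,t)]\!]$ from Corollary \ref{cor 5} provides, by induction on the potential energy, an explicit inversion expressing each tree as a linear combination of grafting monomials in single-vertex trees.

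Once the subalgebra generated by the $k$ single-vertex trees is known to exhaust $\Cal{T}$, one defines $\widetilde{f}(t)$ as the evaluation of $m_t$ in $\Cal{A}$, i.e.\ the element obtained by replacing each single-vertex generator by its image under $f$ and each $\to$ by $\vartriangleright$. The main obstacle is then well-definedness: two distinct grafting monomials representing the same tree $t$ must produce the same element of $\Cal{A}$. The plan is to prove this by induction on the total vertex count, using the pre-Lie identity \eqref{pre-lie identity} in $\Cal{A}$ to transform one monomial into the other; the unipotence of $[\![\beta_S(s,t)]\!]$ controls the induction, since all discrepancies between two representatives of $t$ are supported in strictly higher potential energy and can be cancelled via the inductive hypothesis together with \eqref{pre-lie identity}.

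A more conceptual alternative, which I would note as a cross-check, is the operadic approach of Chapoton-Livernet \cite{CL01}: the pre-Lie operad is identified with the species of rooted trees on labelled vertices, so that the free pre-Lie algebra on $E$ is identified as a vector space with rooted trees whose vertices are decorated by elements of $E$. Combined with the surjectivity established above, this forces the canonical pre-Lie morphism from the free pre-Lie algebra on $E$ onto $(\Cal{T},\to)$ to be an isomorphism, completing the proof.
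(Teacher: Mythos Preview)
The paper does not actually prove this theorem: it is stated as a quotation from the literature, with references to Chapoton--Livernet \cite{CL01}, Dzhumadil'daev--L\"ofwall \cite{AC02}, and the elementary treatment in \cite[Paragraph 6.2]{D.M}. Your closing ``conceptual alternative'' via the operadic result of \cite{CL01} is therefore precisely the paper's own approach---a citation rather than an argument internal to the paper---and once you invoke \cite{CL01} you already have the full statement, not merely a dimension count, so the surjectivity detour becomes redundant.

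Your main line of argument, by contrast, has a genuine gap. The surjectivity part is sound: Corollary~\ref{cor 5} (which is logically independent of Theorem~\ref{d-generators}) shows that the matrix $[\![\beta_S(s,t)]\!]$ is unipotent, so the grafting monomials $\widetilde\Psi_S(t)$ span $\Cal T$ and the single-vertex trees generate $\Cal T$ under~$\to$. (Be careful not to route this through Lemma~\ref{Agrachev-Gamkrelidze monomial}, whose proof explicitly invokes Theorem~\ref{d-generators}; Corollary~\ref{cor 5} alone suffices.) The gap is in what you call the well-definedness step. Having fixed monomials $m_t$ and set $\widetilde f(t)$ to be the evaluation of $m_t$ in $\Cal A$, you must check that $\widetilde f(s\to t)=\widetilde f(s)\vartriangleright\widetilde f(t)$; unwinding this, it amounts to showing that every linear relation among grafting monomials holding in $\Cal T$ is derivable from the pre-Lie identity~\eqref{pre-lie identity} alone. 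Your proposed induction (``discrepancies are supported in strictly higher potential energy and can be cancelled via~\eqref{pre-lie identity}'') does not supply this: the energy filtration tells you \emph{which trees} appear in the expansion of a monomial, but it gives no mechanism for expressing the difference of two monomials that agree in $\Cal T$ as a combination of pre-Lie associators. That expression is exactly the substantive content of the theorem, and the published proofs (e.g.\ in \cite{AC02} or \cite{D.M}) require considerably more than the sketch you give.
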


\subsection{Construction of monomial bases}

A. Agrachev and R. V. Gamkrelidze, in their work \textsl{Chronological algebras and nonstationary vector fields} \cite{AR81}, described a pre-Lie algebra isomorphism between the free pre-Lie algebra generated by a (non-empty) set and the tensor product of the universal enveloping algebra of the underlying Lie algebra with the linear span of the generating set. This pre-Lie algebra isomorphism will be the focus of our attention in this section. Using this isomorphism, we shall review the construction by A. Agrachev and R. Gamkrelidze of monomial bases in free pre-Lie algebras.\\

We described the free pre-Lie algebra in terms of rooted trees. We give below its definition in terms of a universal property.
  
\begin{defn}\label{free pre-Lie algebra}
Let $\Cal A$ be a pre-Lie algebra and $E$ a (non-empty) set with an embedding map $i:E \hookrightarrow \Cal A $. Then $\Cal A$ is said to be a free pre-Lie algebra generated by $E$, if for any pre-Lie algebra $ \Cal B $ and map $ f_\circ:E \longrightarrow \Cal B $ there is a unique pre-Lie algebra homomorphism $ f:\Cal A \longrightarrow \Cal B $, which makes the following diagram commute:
\diagrama{
\xymatrix{ E \ar@{^{(}->}[r]^-{i}\ar[d]_-{f_\circ}
& \Cal A \ar[dl]|-{f} \\
\Cal B
}}
\end{defn}

The free pre-Lie algebra generated by a set $E$ is unique up to isomorphism. It can be obtained as the quotient of the free magmatic algebra $A_E$ with generating set $E$ by the two-sided ideal generated by elements on the form:
\begin{equation}\label{ideal}
x\vartriangleright(y\vartriangleright z)-y\vartriangleright(x\vartriangleright z)-(x\vartriangleright y-y\vartriangleright x)\vartriangleright z,\,for \,x,y,z \in A_E. 
\end{equation}

From Definition \ref{free pre-Lie algebra}, we have that any pre-Lie algebra $\Cal B$ generated by a subset $E\subset\Cal{B}$ is isomorphic to a quotient of the free pre-Lie algebra $\Cal A$ on $E$ by some ideal. Indeed, from the freeness universal property of $\Cal A$, there is a unique homomorphism $f$, which is surjective. The quotient of $\Cal A$ by the kernel of $f$ is isomorphic to $\Cal B$, as in the following commutative diagram:
     
\diagrama{
\xymatrix{
E \ar@{^{(}->}[r]^-{i}\ar@{^{(}->}[dr]|{j} & \Cal A \ar@{->>}[r]^-{q} \ar@{->>}[d]^-{f} & \Cal{A}/kerf \ar[dl]|{\tilde{=}}\\
& \Cal B 
}}
\noindent where $q$ is the quotient map.\\

We shall denote by $A_E$ the free magmatic algebra, and by $\Cal{A}_E$ the free pre-Lie algebra generated by the (non-empty) set $E$. The algebra $A_E$ has a natural grading, where the elements of degree 1 are linear combinations of the elements of $E$. The algebra $\Cal{A}_E$ can be defined as the quotient of $A_E$ by the ideal \eqref{ideal}. This induces a grading on $\Cal{A}_E$, in which the elements of degree 1 are again the linear combinations of the elements of $E$, by identifying the set $E$ with its image under the factorization.\\

Denote by $[\Cal{A}_E]$ the underlying Lie algebra of $\Cal{A}_E$, and $\Cal{U}[\Cal{A}_E]$ its universal enveloping algebra\footnote{\,For more details about the universal enveloping algebra see \cite{J.D74, Ch.R93}.}. The structure of algebra defined on $\Cal{U}[\Cal{A}_E]$ is endowed with the grading deduced from the grading of $\Cal{A}_E$.\\

The representation of the Lie algebra $[\Cal{A}_E]$ by the linear transformations $(x \mapsto L_x$, for $x \in\Cal{A}_E)$ of the algebra $\Cal{A}_E$ is uniquely extended to a representation by the linear transformations $(u \mapsto L_u$, for $u \in\Cal{U}[\Cal{A}_E])$ of the enveloping algebra $\Cal{U}[\Cal{A}_E]$, which makes the following diagram commute:

\diagrama{
\xymatrix{
\Cal{A}_E \ar@{^{(}->}[rr]^-{i} \ar[dr]_{L} && T(\Cal{A}_E) \ar@{->>}[rr]^-{q} \ar[dl]^-{L^{\prime}} && \Cal{U}[\Cal{A}_E] \ar[dl]^-{L\,\,\,\,\,\,\,\,\,\,\,\,\,\,\,\,,}\\
& End(\Cal{A}_E) \ar[rr]^-{\tilde{=}} && End(\Cal{A}_E) 
}}

\noindent where $T(\Cal{A}_E)$ is the tensor algebra of $\Cal{A}_E$, and $L'$ is the linear extension of $L$ that is induced by the universal property of the tensor algebra.  

\begin{lem}\label{linear span}
The linear span of the set 
$$ L_{\Cal{U}[\Cal{A}_E]}^{}E = {\{L_{u}s \,|\, u \in \Cal{U}[\Cal{A}_E], s \in E }\} \subset \Cal{A}_E $$
coincides with the entire algebra $\Cal{A}_E$. 
\end{lem}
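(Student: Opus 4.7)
The plan is a straightforward induction on the homogeneous degree $n$ of $\Cal{A}_E = \bigoplus_{n\geq 1}\Cal{A}_E^{(n)}$, exploiting that $E$ generates $\Cal{A}_E$ as a pre-Lie algebra and that the grading is compatible with $\vartriangleright$. The crucial preliminary ingredient, which I would emphasize before starting the induction, is that the left-multiplication representation $L\colon[\Cal{A}_E]\to\mathrm{End}(\Cal{A}_E)$, $x\mapsto L_x$, extends by the universal property of the enveloping algebra (exactly as in the commutative diagram just preceding the lemma) to an \emph{associative} algebra homomorphism $L\colon\Cal{U}[\Cal{A}_E]\to\mathrm{End}(\Cal{A}_E)$. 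In particular $L_{uv}=L_u\circ L_v$ for all $u,v\in\Cal{U}[\Cal{A}_E]$, and this is what will let me collapse a composition of left-multiplications into a single operator $L_u$.

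The base case $n=1$ is immediate: $\Cal{A}_E^{(1)}=\mathrm{span}(E)$ and each $s\in E$ equals $L_{\mathbf 1}s$, where $\mathbf 1$ denotes the unit of $\Cal{U}[\Cal{A}_E]$. For the inductive step I fix $n\geq 2$ and assume the claim in all strictly smaller degrees. Because $E$ generates $\Cal{A}_E$ and the product is degree-additive, any $z\in\Cal{A}_E^{(n)}$ can be written as a finite sum $z=\sum_i x_i\vartriangleright y_i$ with $x_i\in\Cal{A}_E^{(p_i)}$, $y_i\in\Cal{A}_E^{(q_i)}$, $p_i+q_i=n$, and $p_i,q_i\geq 1$. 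Since each $q_i<n$, the induction hypothesis yields expressions $y_i=\sum_j\lambda_{ij}\,L_{v_{ij}}s_{ij}$ with $v_{ij}\in\Cal{U}[\Cal{A}_E]$ and $s_{ij}\in E$; then $x_i\vartriangleright y_i=L_{x_i}(y_i)=\sum_j\lambda_{ij}\,L_{x_i}L_{v_{ij}}s_{ij}=\sum_j\lambda_{ij}\,L_{x_iv_{ij}}s_{ij}$, and since $x_iv_{ij}\in\Cal{U}[\Cal{A}_E]$ this writes $z$ as a linear combination of elements of $L_{\Cal{U}[\Cal{A}_E]}^{}E$, closing the induction.

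I do not expect a serious obstacle. The only point requiring a moment's justification is the decomposition $z=\sum_i x_i\vartriangleright y_i$ with strictly lower-degree factors at the start of the inductive step, but this is immediate from the construction of $\Cal{A}_E$ as the quotient of the free magmatic algebra on $E$ by the (homogeneous) pre-Lie ideal: each homogeneous component of degree $n\geq 2$ is spanned by $\vartriangleright$-products of two homogeneous elements of positive degree summing to $n$.
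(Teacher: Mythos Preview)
Your argument is correct. The paper does not actually prove this lemma itself: its ``proof'' consists of a single reference, ``See \cite[Lemma 1.1]{AR81}''. Your induction on the grading, using that every homogeneous element of degree $n\ge 2$ lies in $\sum_{p+q=n}\Cal{A}_E^{(p)}\vartriangleright\Cal{A}_E^{(q)}$ together with the multiplicativity $L_{uv}=L_uL_v$ of the extended representation $\Cal{U}[\Cal{A}_E]\to\mathrm{End}(\Cal{A}_E)$, is exactly the standard way to establish such a statement and is presumably what lies behind the cited lemma. There is nothing to correct; your self-contained argument fills in what the paper leaves to the reference.
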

\begin{proof}
See \cite[Lemma 1.1]{AR81}.
\end{proof}

Define $\Cal{B}_E=\Cal{U}[\Cal{A}_E] \otimes \overline{E}$ to be the tensor product of the vector space $\Cal{U}[\Cal{A}_E]$ with the linear span $\overline{E}$ of the set $E$. The space $\Cal{B}_E$ has a structure of algebra over $K$ with the following multiplication:
\begin{equation}\label{multiplication}
 (u_1 \otimes s_1)(u_2 \otimes s_2)=((L_{u_1}s_1) \circ u_2) \otimes s_2, \, \forall u_1,u_2 \in \Cal{U}[\Cal{A}_E], s_1, s_2 \in \overline{E},
\end{equation}
where ``$\circ$'' is the bilinear associative product in $\Cal{U}[\Cal{A}_E]$.\\

The grading of the algebra $\Cal{U}[\Cal{A}_E]$ uniquely determines a grading of $\Cal{B}_E$, by setting the degree of the element $u \otimes s$ equal to the degree of $u$ plus 1. One can verify that the multiplication defined in \eqref{multiplication} satisfies the pre-Lie identity, which means that $\Cal{B}_E$ is a graded pre-Lie algebra.
\begin{thm}\label{main ch. thm}
The graded pre-Lie algebra $\Cal{B}_E$ is isomorphic to the free pre-Lie algebra $(\Cal{A}_E,\rhd)$.
\end{thm}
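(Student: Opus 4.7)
The plan is to construct the proposed isomorphism $F\colon\Cal{A}_E\to\Cal{B}_E$ via the universal property of the free pre-Lie algebra, exhibit a candidate inverse $G\colon\Cal{B}_E\to\Cal{A}_E$ by hand, and verify that $F$ and $G$ compose to the identity on both sides. By Definition \ref{free pre-Lie algebra} there is a unique pre-Lie homomorphism $F$ with $F(s)=1\otimes s$ for every $s\in E$, and by bilinearity of the tensor product the prescription $G(u\otimes s):=L_u s$ defines a linear map $G\colon\Cal{B}_E\to\Cal{A}_E$.

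I would first observe that $G$ is itself a pre-Lie algebra homomorphism. Indeed, using the multiplication \eqref{multiplication} together with the relation $L_{v_1\circ v_2}=L_{v_1}L_{v_2}$ in $\Cal{U}[\Cal{A}_E]$,
\begin{align*}
G\bigl((u_1\otimes s_1)(u_2\otimes s_2)\bigr)
 &= L_{(L_{u_1}s_1)\circ u_2}s_2 = L_{L_{u_1}s_1}L_{u_2}s_2\\
 &= (L_{u_1}s_1)\vartriangleright(L_{u_2}s_2) = G(u_1\otimes s_1)\vartriangleright G(u_2\otimes s_2).
\end{align*}
Hence $G\circ F$ is a pre-Lie endomorphism of $\Cal{A}_E$, and since $G(F(s))=G(1\otimes s)=L_1 s=s$ for every $s\in E$, the uniqueness clause in Definition \ref{free pre-Lie algebra} forces $G\circ F=\id_{\Cal{A}_E}$. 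In particular $F$ is injective, and both maps automatically respect the gradings.

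The main step (and the main technical obstacle) is to prove the companion identity $F(L_u s)=u\otimes s$ for every $u\in\Cal{U}[\Cal{A}_E]$ and $s\in E$. This will immediately yield surjectivity of $F$, since the pure tensors $u\otimes s$ span $\Cal{B}_E$. I would proceed by induction on the total degree of $L_u s$ in $\Cal{A}_E$, the base case $u=1$ being the definition of $F$. For the inductive step, factor $u=x\circ v$ with $x\in\Cal{A}_E$ of positive degree and $v\in\Cal{U}[\Cal{A}_E]$, so $L_u s=x\vartriangleright(L_v s)$; applying Lemma \ref{linear span} to write $x=\sum_i\lambda_i L_{u_i}s_i$ with strictly smaller degrees, the inductive hypothesis supplies $F(x)=\sum_i\lambda_i u_i\otimes s_i$ and $F(L_v s)=v\otimes s$, and the homomorphism property of $F$ together with \eqref{multiplication} gives
\[
F(L_u s)=F(x)\cdot F(L_v s)=\sum_i\lambda_i\bigl((L_{u_i}s_i)\circ v\bigr)\otimes s=(x\circ v)\otimes s=u\otimes s.
\]
The genuine subtlety here lies in the bookkeeping: one must simultaneously run the induction on two compatible filtrations on $\Cal{A}_E$ and $\Cal{U}[\Cal{A}_E]$, and translate back and forth between the two descriptions of an element of $\Cal{A}_E$, namely as a pre-Lie word and as a value $L_u s$ of the action. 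Once this is in place, $F$ is both injective and surjective, hence the desired isomorphism of graded pre-Lie algebras.
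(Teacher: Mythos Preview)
Your proof is correct and follows essentially the same approach as the paper: both construct the map via the universal property of the free pre-Lie algebra and isolate the key formula $F(L_u s)=u\otimes s$ of \eqref{f}. The paper's own proof is terser, simply citing \cite{AR81} for bijectivity, whereas you supply the explicit inverse $G(u\otimes s)=L_u s$, verify it is a pre-Lie morphism, and carry out the induction for the formula---making the argument self-contained; the only point to tighten is that in the inductive step you should reduce by linearity to monomials $u=x_1\circ\cdots\circ x_r$ before writing $u=x\circ v$.
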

\begin{proof}
Let $f_{\circ}:E \longrightarrow \Cal{B}_E$ be a map defined by $f_{\circ}(s)=1 \otimes s\,, \forall s \in E$, where $1$ is the unit element of $\Cal{U}[\Cal{A}_E]$. Using the freeness property of the pre-Lie algebra $\Cal{A}_E$, there is a unique homomorphism $f:\Cal{A}_E \longrightarrow \Cal{B}_E$, such that:
$$f(s)= f_{\circ}(s)= 1 \otimes s, \forall s \in {E}\subset \Cal{A}_E.$$

From Lemma \ref{linear span}, we have that for any element $x$ in $\Cal{A}_E$ there exists $u \in \Cal{U}[\Cal{A}_E]$ and $s \in E$, such that $x = L_{u}s$. Now, define $f$ as:
\begin{equation}\label{f}
f(L_{u}s)=u \otimes s, \forall x=L_{u}s \in \Cal{A}_E.
\end{equation}

Then the map $f$ with \eqref{f} is bijective (see \cite[Theorem 1.1]{AR81}), hence it is an isomorphism, which proves the Theorem. 
\end{proof}
Choose a total order on the elements of $E$. Then as a corollary of Theorem \ref{main ch. thm} and the Poincar\'e-Birkhoff-Witt Theorem, we obtain that: 
\begin{equation}\label{formula A}
\Cal{A}_n\tilde{=}\Cal{B}_n=\Cal{U}_{n-1}\otimes \overline{E}, \forall n\geq\!1,
\end{equation}
where, for any $n\geq 2$, a basis of $\Cal{U}_{n-1}$ is given by: 
$$\left \{ x^{e_1}_{\!j_{1}} \circ \cdots \circ x^{e_r}_{\!j_{r}}\ \big|\ \sum^{r}_{k=1}{j_k}=n-1, \hbox{ and } x^{e_1}_{\!j_{1}} \geq \cdots \geq x^{e_r}_{\!j_{r}}\right \}.$$ 
Here we use a monomial basis ${x_j^{1}, \ldots, x_j^{d_j}}$ of the subspace $\Cal{A}_j$, for any $j= 1, \ldots, n-1$, given by the induction hypothesis. We endow  this basis with the total order $x^{1}_{\!j} < \ldots < x^{d_j}_{\!j} $, which in turn defines a total order on the basis of $\Cal{A}_1 \oplus \ldots \oplus \Cal{A}_{n-1}$, obtained by the disjoint union, by asking that $x_j^{r} > x_{j'}^{r'}$ if $j>j'$.\\
 
Hence, using formula \eqref{formula A} and the isomorphism $f$ described in \eqref{f} , we get the following monomial basis for the homogeneous component $\Cal{A}_n$:
\begin{equation}\label{equation B}
\left\{ x^{e_1}_{\!j_{1}} \rhd \big (x^{e_2}_{\!j_{2}} \rhd (\cdots \rhd (x^{e_r}_{\!j_{r}}\rhd s_j) \cdots)\big )\ \big|\ \sum^{r}_{k=1}{j_k}=n-1,\ x^{e_1}_{\!j_{1}} \geq \cdots \geq x^{e_r}_{\!j_{r}} \hbox{ and } s_j \in E \right\}.
\end{equation}
 
\begin{align*}
\Cal{A}_1 &\ \tilde{=}\ \Cal{U}_0\otimes\overline{E}\\
& =\ \prec 1\otimes s\,|\,1\in K, s \in E\succ,\\
\Rightarrow\Cal{A}_1 &=\ \prec L_1s=s\,|\,s\in E\succ=\overline{E}.\\\\
\Cal{A}_2 &\ \tilde{=}\ \Cal{U}_1\otimes\Cal{A}_1\\
& =\ \prec s_1\otimes s_2\,|\, s_1, s_2 \in E\succ,\\  
\Rightarrow\Cal{A}_2 &=\ \prec L_{s_1}s_2=s_1\vartriangleright s_2\,|\,s_1, s_2 \in E\succ\!\!.\\\\
\Cal{A}_3 &\ \tilde{=}\ \Cal{U}_2\otimes\Cal{A}_1\\
& =\ \prec x_2^e\otimes s_{j_1}, (x_1^{e_1}\circ x_1^{e_2})\otimes s_{j_2}\ |\ e = 1, \ldots, d_2,\ e_1, e_2= 1, \ldots, d, e_1 \geq e_2, s_{j_1}, s_{j_2} \in E\succ,\\
&\hskip -13mm \Rightarrow\hbox{ A monomial basis of $\Cal{A}_3$ is then given by: }\\
&\hskip  5mm\left\{ (s_1 \rhd s_2) \rhd s_3 |\ s_1, s_2, s_3 \in E \right\} \sqcup \left\{ s_1 \rhd (s_2 \rhd s_3) |\ s_1, s_2, s_3 \in E,\ s_1 \geq s_2 \right\}.
\\\\
\Cal{A}_4 &\ \tilde{=} \Cal{U}_3\otimes\Cal{A}_1\\
& = \prec x_3^e\otimes s_{j_1}, (x_2^{e'}\circ x_1^{e_1})\otimes s_{j_2}, (x_1^{e_2} \circ x_1^{e_3} \circ x_1^{e_4})\otimes s_{j_3}\ |\ e = 1, \ldots, d_3, e' = 1, \ldots, d_2,\\
& \hskip 5mm\ e_1, e_2, e_3, e_4= 1, \ldots, d_1, e_2 \geq e_3\geq e_4, s_{j_1}, s_{j_2}, s_{j_3} \in E\succ,
\end{align*}
\begin{align*}
& \hskip -13mm \Rightarrow\hbox{ A monomial basis of $\Cal{A}_4$ is then given by: }\\
& \hskip -13mm\left\{\left((s_1 \rhd s_2) \rhd s_3 \right) \rhd s_4 |\ s_j \in E,\hbox{ for } j=1, 2, 3, 4\right\} \sqcup \left\{\left(s_1 \rhd (s_2 \rhd s_3)\right)\rhd s_4 |\ s_j \in E,\hbox{ for } j=1, 2, 3, 4,\ s_1 \geq s_2 \right\} \sqcup\\
& \hskip -13mm\left\{(s_1 \rhd s_2) \rhd (s_3 \rhd s_4) |\ s_j \in E,\hbox{ for } j=1, 2, 3, 4\right\} \sqcup \left\{s_1 \rhd (s_2 \rhd (s_3 \rhd s_4)) |\ s_j \in E,\hbox{ for } j=1, 2, 3, 4, s_1 \geq s_2 \geq s_3 \right\}.\\\\
\Cal{A}_5 &\ \tilde{=} \Cal{U}_4\otimes\Cal{A}_1\\
& = \prec x_4^e\otimes s_{j_1}, (x_3^{e'}\circ x_1^{e_1})\otimes s_{j_2}, (x_2^{e'_2}\circ x_2^{e''_2})\otimes s_{j_3}, (x_2^{e'''_2} \circ x_1^{e_2} \circ x_1^{e_3})\otimes s_{j_4}, (x_1^{e_4} \circ x_1^{e_5} \circ x_1^{e_6}\circ x_1^{e_7})\otimes s_{j_5} \ |\\
& \hskip 5mm e = 1, \ldots, d_4, e' = 1, \ldots, d_3, e'_2, e''_2, e'''_2= 1, \ldots, d_2, \ e_i= 1, \ldots, d_1, \forall i=1, \ldots, 7, e'_2 \geq e''_2, e_2 \geq e_3,\\
& \hskip 5mm e_4 \geq e_5\geq e_6\geq e_7, s_{j_r} \in E, \forall r=1, \ldots, 5\succ,\\
&\hskip -13mm \Rightarrow\hbox{ A monomial basis of $\Cal{A}_5$ is then given by: }\\
&\hskip -13mm \left\{((s_1 \rhd s_2) \rhd s_3)\rhd s_4) \rhd s_5 |\ s_j \in E,\hbox{ for } j=1,\ldots, 5 \right\}\sqcup\Big\{(s_1 \rhd (s_2 \rhd s_3))\rhd s_4) \rhd s_5 |\ s_j \in E,\hbox{ for } j=1, \ldots, 5,\\ 
&\hskip -12mm s_1 \geq s_2 \Big\}\sqcup\left\{((s_1 \rhd s_2) \rhd (s_3 \rhd s_4)) \rhd s_5 |\ s_j \in E,\hbox{ for } j=1, \ldots, 5 \right\}\sqcup\Big\{(s_1 \rhd (s_2 \rhd (s_3 \rhd s_4))) \rhd s_5 |\ s_j \in E,\hbox{ for } \\
&\hskip -12mm j=1, \ldots, 5, s_1 \geq s_2 \geq s_3 \Big\}\sqcup\left\{((s_1 \rhd s_2) \rhd s_3) \rhd (s_4 \rhd s_5) |\ s_j \in E,\hbox{ for } j=1, \ldots, 5 \right\}\sqcup\Big\{( s_1 \rhd (s_2 \rhd s_3))\rhd (s_4\rhd\\ 
&\hskip -12mm s_5)\ |\ s_j \in E, \hbox{ for } j=1, \ldots, 5, s_1 \geq s_2 \Big\}\sqcup\Big\{(s_1 \rhd s_2) \rhd ( (s_3 \rhd s_4)\rhd s_5) |\ s_j \in E,\hbox{ for } j=1, \ldots, 5, s_1 \rhd s_2 \geq s_3 \rhd s_4 \Big\} \\
&\hskip -12mm \sqcup\Big\{(s_1 \rhd s_2)\rhd (s_3 \rhd (s_4\rhd s_5)) |\ s_j \in E,\hbox{ for } j=1, \ldots, 5, s_3 \geq s_4\Big\}\sqcup\Big\{s_1 \rhd (s_2 \rhd (s_3 \rhd(s_4\rhd s_5)))\ |\ s_j \in E, \hbox{ for }\\
&\hskip -12mm j=1, \ldots, 5, s_1 \geq s_2\geq s_3 \geq s_4 \Big\}.\\
&\vdots\\
& etc.
\end{align*}
  
\subsection{Base change from any monomial basis to the rooted tree basis} 
 
We relate now any Agrachev-Gamkrelidze type monomial basis in a free pre-Lie algebra, obtained from the formula \eqref{formula A}, with the presentation of the free pre-Lie algebra as the linear span $\Cal{T}$ of the (non-planar) rooted trees with one generator $\{\racine\,\}$, endowed with the grafting $"\to"$. In the following, we give the tree expansions of the first five homogeneous components of such a monomial basis:
\begin{align*}
\Cal{T}_1&=\prec e_1=\racine\,\succ\!\!.\\  
\Cal{T}_2&=\prec\racine\to\!\racine\,\succ\,\,\,=\,\,\,\prec e_1=\arbrea\,\succ\!\!.\\ 
\Cal{T}_3&=\prec(\racine\to\!\racine)\to\!\racine\,, \racine\to\!(\racine\to\!\racine)\,\succ\,\,\,=\,\,\,\prec e_1=\arbreba, e_2=\arbreba\!+\!\arbrebb\,\succ\!\!.\\ 
\Cal{T}_4&=\prec((\racine\to\!\racine)\to\!\racine)\to\!\racine, (\racine\to\!(\racine\to\!\racine))\to\!\racine, (\racine\to\!\racine)\to\!(\racine\to\!\racine), \racine\to\!(\racine\to\!(\racine\to\!\racine))\,\succ\\ 
&=\prec e_1=\arbreca, e_2=\arbreca+\arbrecb, e_3=\arbreca+\arbrecc, e_4=\arbreca+\arbrecb+3 \arbrecc+\arbrecd\succ\!\!.
\end{align*}
\begin{align*}
\Cal{T}_5&=\prec(((\racine\to\!\racine)\to\!\racine)\to\!\racine)\to\!\racine, ((\racine\to\!(\racine\to\!\racine))\to\!\racine)\to\!\racine, ((\racine\to\!\racine)\to\!(\racine\to\!\racine))\to\!\racine, (\racine\to\!(\racine\to\!\\ 
&\hskip 5mm (\racine\to\!\racine)))\to\!\racine, (((\racine\to\!\racine)\to\!\racine)\to\!(\racine\to\!\racine), ((\racine\to\!(\racine\to\!\racine))\to\!(\racine\to\!\racine), (\racine\to\!\racine)\to\!((\racine\to\!\racine)\to\!\racine),\\ 
&\hskip 5mm (\racine\to\!\racine)\to\!(\racine\to\!(\racine\to\!\racine)), \racine\to\!(\racine\to\!(\racine\to\!(\racine\to\!\racine)))\succ\!\!\\
&=\prec e_1=\arbreda, e_2=\arbreda+\arbredb, e_3=\arbreda+\arbredc, e_4=\arbreda+\arbredb+3 \arbredc+\arbredd, e_5=\arbreda+\arbrede,\\
& \hskip 8mm e_6=\arbreda+\arbredb+\arbrede+\arbredz, e_7=\arbreda+\arbredc+\arbredf, e_8=\arbreda+\arbredc+2 \arbrede+\arbredf+\arbredg,\\
& \hskip 8mm e_9=\arbreda+\arbredb+3 \arbredc+\arbredd+4 \arbrede+4 \arbredz+3 \arbredf+6 \arbredg+\arbredh\!\succ\!\!. 
\end{align*}

Now, for any homogeneous component $\Cal{T}_{\!\!n}$, each vector in the monomial basis described above is defined as a monomial $m(\racine, \to)$ of the tree with one vertex "$\racine$" multiplied (by itself) using the pre-Lie grafting $"\to"$ with the parentheses. This monomial in turn determines two monomials in the algebras $(\Cal{T}^{pl},\lbutcher)$ and $(\Cal{T}, \butcher)$ respectively. One of these monomials is obtained by replacing the grafting $"\to"$ by the left Butcher product $"\,\lbutcher"$, which induces a planar rooted tree $\tau$. The other monomial is  deduced by replacing the product $"\to"$ by the usual Butcher product, which in turn defines a (non-planar) rooted tree $t$.
\begin{defn}\label{good basis}
A monomial basis for a free pre-Lie algebra is said to be a "tree-grounded" monomial basis if we obtain the Chapoton-Livernet tree basis when we replace the pre-Lie product in each monomial in this basis by the Butcher product $"\butcher"$. For any positive integer $n$, a monomial basis of $\Cal{T}_n$ will also be called tree-grounded if this property holds in $\Cal{T}_{\!\!n}$. 
\end{defn}
\begin{exam}
In the space of all (non-planar) rooted trees $\Cal T$, the homogeneous component $\Cal{T}_4$ has four types of monomial bases, which are:
\begin{align*}
\Cal{B}_1&=\big\{((\racine\to\!\racine)\to\!\racine)\to\!\racine, (\racine\to\!(\racine\to\!\racine))\to\!\racine, (\racine\to\!\racine)\to\!(\racine\to\!\racine), \racine\to\!(\racine\to\!(\racine\to\!\racine))\,\big\}\\ 
&=\big\{e_1=\arbreca, e_2=\arbreca+\arbrecb, e_3=\arbreca+\arbrecc, e_4=\arbreca+\arbrecb+3 \arbrecc+\arbrecd \big\}.\\
\Cal{B}_2&=\big\{((\racine\to\!\racine)\to\!\racine)\to\!\racine, (\racine\to\!(\racine\to\!\racine))\to\!\racine, \racine \to((\racine\to\!\racine)\to\!\racine), \racine\to\!(\racine\to\!(\racine\to\!\racine))\,\big\}\\ 
&=\big\{e_1=\arbreca, e_2=\arbreca+\arbrecb, e_3=\arbreca+\arbrecb+\arbrecc, e_4=\arbreca+\arbrecb+3 \arbrecc+\arbrecd \big\}.\\
\Cal{B}_3&=\big\{((\racine\to\!\racine)\to\!\racine)\to\!\racine, (\racine\to\!\racine)\to (\racine\to\!\racine), \racine \to((\racine\to\!\racine)\to\!\racine), \racine\to\!(\racine\to\!(\racine\to\!\racine))\,\big\}\\ 
&=\big\{e_1=\arbreca, e_2=\arbreca+\arbrecc, e_3=\arbreca+\arbrecb+\arbrecc, e_4=\arbreca+\arbrecb+3 \arbrecc+\arbrecd \big\}.\\
\Cal{B}_4&=\big\{(\racine\to\!(\racine\to\!\racine))\to\!\racine, (\racine\to\!\racine)\to (\racine\to\!\racine), \racine \to((\racine\to\!\racine)\to\!\racine), \racine\to\!(\racine\to\!(\racine\to\!\racine))\,\big\}\\ 
&=\big\{e_1=\arbreca+\arbrecb, e_2=\arbreca+\arbrecc, e_3=\arbreca+\arbrecb+\arbrecc, e_4=\arbreca+\arbrecb+3 \arbrecc+\arbrecd \big\}.
\end{align*}
We find that the monomial bases $\Cal{B}_1$ and $\Cal{B}_2$ are tree-grounded monomial bases of $\Cal{T}_4$, because replacing the pre-Lie grafting $"\to"$ by the Butcher product $"\butcher"$ gives back the tree basis $\big\{\arbreca, \arbrecb, \arbrecc, \arbrecd \big\}$. But one can note that the bases $\Cal{B}_3$ and $\Cal{B}_4$ are not tree-grounded.\footnote{We thank the referee for having suggested this example to us.}  
\end{exam}

\begin{lem}
A monomial basis for the free pre-Lie algebra with one generator is tree-grounded if and only if it comes from a section map $S$ according to the linear map $\widetilde{\Psi}_S$.
\end{lem}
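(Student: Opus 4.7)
The plan is to set up a bijection between sections $S$ of $\pi$ and tree-grounded monomial bases of $\Cal{T}$, using $\widetilde{\Psi}_S$ as the bridge. The key observation to record first is that every planar rooted tree $\sigma\in T^{pl}$ admits a unique expression as a $\lbutcher$-monomial $m_\sigma(\racine,\lbutcher)$ in the generator $\racine$, because $(\Cal{T}^{pl},\lbutcher)$ is the free magmatic algebra on $\{\racine\}$; and under the projection $\pi$ the Butcher-like structure is preserved in the sense that $\pi\big(m(\racine,\lbutcher)\big)=m(\racine,\butcher)$ and $\pi\big(m(\racine,\searrow)\big)=m(\racine,\to)$, which is just the diagram in the introduction read at the level of specific parenthesized monomials. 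Combining this with Theorem~\ref{D.K}, which says $\Psi$ sends $m(\racine,\lbutcher)$ to the same monomial $m(\racine,\searrow)$, one gets
\[
\widetilde{\Psi}_S(t)\;=\;\overline{\Psi}(S(t))\;=\;\pi\big(\Psi(m_{S(t)}(\racine,\lbutcher))\big)\;=\;m_{S(t)}(\racine,\to),
\]
so $\widetilde{\Psi}_S(t)$ is literally the pre-Lie monomial obtained from the planar representative $S(t)$ by swapping $\lbutcher$ for $\to$.

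Next I would prove the forward direction. Given a section $S$, set $e_t:=\widetilde{\Psi}_S(t)$ for each $t\in T$. By Corollary~\ref{cor 5}, the restriction of $\widetilde{\Psi}_S$ to each homogeneous component $\Cal{T}_{\!\!n}$ is upper triangular unipotent in the Chapoton--Livernet basis, so $\{e_t\}_{t\in T_n}$ is a basis of $\Cal{T}_{\!\!n}$, and each $e_t$ is by construction a pre-Lie monomial in $\racine$. To see that this basis is tree-grounded in the sense of Definition~\ref{good basis}, replace $\to$ by $\butcher$ in $e_t=m_{S(t)}(\racine,\to)$ to get $m_{S(t)}(\racine,\butcher)=\pi(S(t))=t$, so the lower-energy terms reproduce exactly the tree basis $T_n$.

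For the converse, start with any tree-grounded monomial basis $\{e_t\}_{t\in T}$, where each $e_t$ is presented as a specific parenthesized pre-Lie monomial $m_t(\racine,\to)$. Use this parsing to define $S(t):=m_t(\racine,\lbutcher)\in T^{pl}$, which is well defined since each parenthesization uniquely determines a planar binary tree and hence, via the Knuth rotation correspondence $\Phi$ of \eqref{Phi}, a planar rooted tree. The tree-grounded condition reads $m_t(\racine,\butcher)=t$ in $\Cal{T}$, i.e.\ $\pi(S(t))=t$, so $S$ is indeed a section of $\pi$. Finally one checks that the two assignments are mutually inverse: starting from $S$, forming $\{e_t=\widetilde{\Psi}_S(t)\}$ and then reading off its parsing recovers $S(t)$ because the parsing of $m_{S(t)}(\racine,\to)$ is the same as that of $S(t)=m_{S(t)}(\racine,\lbutcher)$; conversely, starting from $\{e_t\}$ and passing through the section constructed above recovers the same monomials.

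The only delicate point, and the one I would be most careful about, is the identification of a "monomial" with its underlying parenthesization rather than with its value in $\Cal{T}$: the pre-Lie relation~\eqref{pre-lie identity} means several distinct parenthesizations can represent the same element, so the lemma must be read as: tree-grounded families of formal pre-Lie monomials whose values form a basis are in bijection with sections $S$. Once this is made explicit, the argument is essentially bookkeeping on the commutative diagram relating $\Psi$, $\pi$, $S$ and $\widetilde{\Psi}_S$, together with the unipotence statement of Corollary~\ref{cor 5} which guarantees that the candidate families really are bases.
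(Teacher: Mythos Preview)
Your proposal is correct and follows essentially the same route as the paper: define $S(t)=m_t(\racine,\lbutcher)$ from the parenthesization of each monomial in a tree-grounded basis and check $\pi\circ S=\mathrm{Id}$, and conversely observe that $\widetilde{\Psi}_S(t)=m_{S(t)}(\racine,\to)$ has lower-energy term $t$. Your write-up is in fact more careful than the paper's, which leaves implicit both the identity $\widetilde{\Psi}_S(t)=m_{S(t)}(\racine,\to)$ and the appeal to Corollary~\ref{cor 5} ensuring the resulting family is a basis; your remark about reading ``monomial'' as a formal parenthesized word rather than merely its value in $\Cal T$ is also a useful clarification that the paper glosses over.
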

\begin{proof}
For any element $x=m(\racine, \to)$ of some tree-grounded monomial basis, let $S(t)=m(\racine, \lbutcher)$ where $t=m(\racine, \butcher)$ is the lower-energy term of $x$. By Definition \ref{good basis}, these lower-energy terms form  a basis of $\Cal T$, hence $S$ is uniquely defined that way, and it is a section of $\pi$, as in the following diagram:
\diagrama{
\xymatrix{
m(\racine,\lbutcher) \in \Cal {T}^{pl} \ar[r]^{\Psi}\ar@<12mm>@{->>}[d]^{\pi}
&\Cal T^{pl} \ni m(\racine, \searrow)\ar@<-9mm>@{->>}[d]^{\pi}\\
t=m(\racine, \butcher) \in \Cal{T} \ar@{.>}[r]_{{\widetilde\Psi}_S}\ar@<-10mm>@{.>}[u]^S & \Cal{T}\ni x=m(\racine, \to)}}
On the other hand, any monomial basis induced by a section map $S$ is obviously a tree-grounded monomial basis. 
\end{proof}
 
\begin{lem}\label{Agrachev-Gamkrelidze monomial}
The Agrachev-Gamkrelidze monomial bases are tree-grounded.
\end{lem}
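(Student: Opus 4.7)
The plan is to proceed by induction on the degree $n$. The base case $n=1$ is immediate since the Agrachev-Gamkrelidze basis of $\Cal{A}_1$ reduces in the one-generator case to $\{\racine\,\}$, which equals the tree basis. For the inductive step, assume that the Agrachev-Gamkrelidze monomial basis of $\Cal{A}_j$ is tree-grounded for every $j<n$; by induction hypothesis the map $x^e_j\mapsto t^e_j$, obtained by replacing $\rhd$ with $\butcher$, is a bijection from this monomial basis onto the set $T_j$ of rooted trees of degree $j$, and the total order fixed on the $x^e_j$ transfers to a total order on $\bigsqcup_{j<n} T_j$.

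A typical Agrachev-Gamkrelidze basis element of $\Cal{A}_n$ has the form
$$x\ =\ x^{e_1}_{j_1}\rhd\big(x^{e_2}_{j_2}\rhd(\cdots\rhd(x^{e_r}_{j_r}\rhd\racine))\big),\qquad \sum_{k=1}^r j_k=n-1,\quad x^{e_1}_{j_1}\geq\cdots\geq x^{e_r}_{j_r}.$$
Replacing $\rhd$ by $\butcher$ throughout, the induction hypothesis transforms each inner factor $x^{e_k}_{j_k}$ into its associated rooted tree $t^{e_k}_{j_k}\in T_{j_k}$. It then remains to evaluate the right-nested Butcher product $t^{e_1}_{j_1}\butcher\big(t^{e_2}_{j_2}\butcher(\cdots\butcher(t^{e_r}_{j_r}\butcher\racine))\big)$. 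Since $\racine=B_+(\emptyset)$, the base step gives $t^{e_r}_{j_r}\butcher\racine=B_+(t^{e_r}_{j_r})$, and iterating the identity $s\butcher B_+(\tau_1\cdots\tau_\ell)=B_+(s\,\tau_1\cdots\tau_\ell)$ — whose validity is independent of order by the NAP property $s\butcher(s'\butcher t)=s'\butcher(s\butcher t)$ established in the preliminary section — one collapses the expression to $B_+(t^{e_1}_{j_1}\,t^{e_2}_{j_2}\cdots t^{e_r}_{j_r})$.

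To finish, I would show that as $x$ ranges over the Agrachev-Gamkrelidze basis of $\Cal{A}_n$, the resulting trees $B_+(t^{e_1}_{j_1}\cdots t^{e_r}_{j_r})$ run over $T_n$ without repetition. Every rooted tree $t\in T_n$ decomposes uniquely, upon removal of the root, into a multiset $\{\!\{\tau_1,\ldots,\tau_r\}\!\}$ of rooted trees of total degree $n-1$, and conversely each such multiset reconstructs a unique tree via $B_+$. The decreasing condition $x^{e_1}_{j_1}\geq\cdots\geq x^{e_r}_{j_r}$ in the Agrachev-Gamkrelidze indexing picks, via the transferred total order on $\bigsqcup_{j<n} T_j$, exactly one ordered representative of each multiset of subtrees. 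This establishes the bijection between the set of Agrachev-Gamkrelidze monomials of degree $n$ and $T_n$ under the substitution $\rhd\rightsquigarrow\butcher$, which is precisely the tree-grounded condition of Definition \ref{good basis}.

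I do not anticipate a serious obstacle: the argument is essentially a matching of the two enumerations of rooted trees by multisets of lower-degree trees. The only place where care is required is the NAP-collapse step, which must be applied inductively from the innermost factor outwards, and the bookkeeping that the transported total order on $\bigsqcup_{j<n}T_j$ is the one with respect to which the Agrachev-Gamkrelidze decreasing condition is imposed.
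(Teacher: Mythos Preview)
Your proof is correct and follows essentially the same inductive approach as the paper: both reduce the lower-energy term of an Agrachev--Gamkrelidze monomial to $B_+(t^{e_1}_{j_1}\cdots t^{e_r}_{j_r})$ via the NAP collapse, and then conclude that the tree basis is recovered. You are in fact more explicit than the paper at the final step, spelling out the bijection between ordered tuples under the decreasing condition and multisets of branch trees, whereas the paper simply asserts that the tree basis is recovered.
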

\begin{proof}
From the construction of Agrachev-Gamkrelidze monomial bases, and using the presentation of the free pre-Lie algebra in terms of rooted trees (see Theorem \ref{d-generators}), with one generator, the formula \eqref{formula A} can be written as:
$$\Cal{A}_n \tilde{=} \Cal{U}_{n-1},\ \forall n \geq 1$$
such that for a homogeneous component $\Cal{A}_n$, the monomial basis in \eqref{equation B} becomes: 
$$\left\{ x^{e_1}_{\!j_{1}} \to \big (x^{e_2}_{\!j_{2}} \to (\cdots \to (x^{e_r}_{\!j_{r}}\to \racine) \cdots)\big )\ \big|\ \sum^{r}_{k=1}{j_k}=n-1,\ x^{e_1}_{\!j_{1}} \geq \cdots \geq x^{e_r}_{\!j_{r}} \right\}.$$

The monomial basis for $\Cal{A}_1$, namely $\{\racine\}$, is obviously tree-grounded in the sense of Definition \ref{good basis}. Suppose, by the induction hypothesis, that the monomial basis $\{x_j^{e_1}, \ldots, x_j^{e_j}\}$ is a tree-grounded basis of $\Cal{A}_j$, for $j=1, \ldots, n-1$. Consider the corresponding lower-energy terms $t_j^{e_1}, \ldots, t_j^{e_j}$ obtained by replacing the grafting "$\to$" by the Butcher product $"\butcher"$ in each monomial. The lower-energy term of the monomial
\begin{equation}\label{monomial A}
x^{e_1}_{\!j_{1}} \to \big (x^{e_2}_{\!j_{2}} \to (\cdots \to (x^{e_r}_{\!j_{r}}\to \racine) \cdots)\big )
\end{equation}
is given by :
\begin{align*}
& t^{e_1}_{\!j_{1}} \butcher \big (t^{e_2}_{\!j_{2}} \butcher (\cdots \butcher (t^{e_r}_{\!j_{r}}\butcher \racine) \cdots)\big ) \\
&= B_{\!_{+}}(t_{j_1}^{e_1} \ldots t_{j_r}^{e_r}). 
\end{align*}

Hence we recover the tree basis of $\Cal{A}_n$ by taking the lower-energy term of each monomial \eqref{monomial A}, thus proving Lemma \ref{Agrachev-Gamkrelidze monomial}. 
\end{proof} 
As a particular case of our general construction, an Agrachev-Gamkrelidze monomial basis, by means of the isomorphism \eqref{f}, gives rise to some particular section $S$. Conversely, any section $S$ of $\pi$ defines a tree-grounded monomial basis for the free pre-Lie algebra $(\Cal{T}, \to)$. For any integer $n\geq 1$, the matrix of the coefficients of the tree-grounded monomial of $\Cal{T}_n$ associated with the section $S$ is exactly the matrix $[\![\beta_S(s, t)]\!]_{\,s,\,t \in \Cal{T}_{\!n}}^{}$ described in Corollary \ref{cor 5} in the preceding section.\\

\paragraph{\textbf{Acknowledgments}}{I would like to thank D. Manchon for his continuous support to me throughout the writing of this paper. I also thank F. Patras and K. Ebrahimi-Fard for valuable discussions and comments. Finally, I am grateful to the referee for her/his careful evaluation of this work and for all suggestions.}


\end{document}